\theoremstyle{plain}
  \newtheorem{thm}{Theorem}[section]
  \newtheorem{prop}[thm]{Proposition}
  \newtheorem{defn-prop}[thm]{Definition-Proposition}
  \newtheorem{lem}[thm]{Lemma}
  \newtheorem{cor}[thm]{Corollary}
\theoremstyle{definition}
  \newtheorem{defn}[thm]{Definition}
  \newtheorem{ex}[thm]{Example}
  \newtheorem{rem}[thm]{Remark}
\def\setmin{{-}}
\def\R{\mathbb{R}}
\def\Z{\mathbb{Z}}
\def\AAA{\mathcal{A}}
\def\LLL{\mathcal{L}}
\DeclareMathOperator{\Acyc}{Acyc}
\DeclareMathOperator{\Hasse}{Hasse}
\DeclareMathOperator{\tor}{tor}
\DeclareMathOperator{\torHasse}{torHasse}
\DeclareMathOperator{\aff}{aff}
\DeclareMathOperator{\extreme}{ex}
\DeclareMathOperator{\Chambers}{Cham}
\def\epsilon{\varepsilon}
\tikzset{big arrow/.style={decoration={markings,mark=at position 0.93
      with {\arrow[scale=2]{>}}}, postaction={decorate}, shorten >=8pt}}
\tikzset{big dashed arrow/.style={decoration={markings,mark=at position 0.93
      with {\arrow[scale=2]{>}}}, postaction={decorate}, shorten >=8pt, dashed}}
\newcommand\modone[1]{{{#1} \bmod 1}}
\begin{document}

\title{Toric partial orders}

\author[M.~Develin]{Mike Develin} \address{
Department of Audience Research, Facebook Inc.\\
1601 Willow Road\\
Menlo Park, CA 94025}
\email{develin@post.harvard.edu}

\author[M.~Macauley]{Matthew Macauley} \address{Department of
  Mathematical Sciences \\ Clemson University \\ Clemson, SC 29634-0975, USA}
\email{macaule@clemson.edu}

\author[V.~Reiner]{Victor Reiner} \address{Department of Mathematics
  \\ University of Minnesota \\ Minneapolis, MN 55455, USA}
\email{reiner@math.umn.edu}

\thanks{First author supported by AIM Five-Year Fellowship
  (2003--2008). Second author gratefully acknowledges support from
  a Simons Foundation Collaboration Grant \#246042 and 
  NSF grant DMS-1211691. Third author supported by NSF grant DMS-1001933.}

\keywords{Braid arrangement, convex geometry, cyclic order, partial
  order, unimodular, toric arrangement, transitivity, Coxeter element, 
  reflection functor} 

\subjclass[2010]{06A06,52C35}


\begin{abstract}
  We define toric partial orders, corresponding to regions of graphic
  toric hyperplane arrangements, just as ordinary partial orders
  correspond to regions of graphic hyperplane
  arrangements. Combinatorially, toric posets correspond to finite posets
  under the equivalence relation generated by converting minimal
  elements into maximal elements, or sources into sinks. We derive
  toric analogues for several features of ordinary partial orders,
  such as chains, antichains, transitivity, Hasse diagrams, linear
  extensions, and total orders.
\end{abstract}

\maketitle

\section{Introduction}

We define finite {\it toric partial orders} or {\it toric posets}, which are cyclic analogues of partial orders, but differ from an established notion of {\it partial cyclic orders} already in the literature; see Remark~\ref{disambiguation-remark} below.  
Toric posets can be defined in combinatorial geometric ways
that are analogous to partial orders or posets:
\begin{enumerate}
\item[$\bullet$] Posets on a finite set $V$ correspond to open polyhedral cones that arise as chambers in {\it graphic hyperplane arrangements} in $\R^V$; 
toric posets correspond to chambers occurring within 
{\it graphic toric hyperplane arrangements} in the quotient space $\R^V/\Z^V$.
\item[$\bullet$] Posets correspond to {\it transitive closures} of acyclic orientations of graphs;  toric posets correspond to a 
notion of {\it toric transitive closures} of acyclic orientations.  
\item[$\bullet$] Both transitive closure and toric transitive closure
will turn out to be {\it convex closures}, so that there is a notion of {\it toric Hasse diagram}
for a toric poset, like the Hasse diagram of a poset.
\end{enumerate}
We next make this more precise, 
indicating where the main results will be proven.

\subsection{Posets geometrically}
We first recall
(e.g. from Stanley \cite{Stanley:73},
Greene and Zaslavsky \cite[\S 7]{Greene:83}, 
Postnikov, Reiner and Williams \cite[\S\S 3.3-3.4]{Postnikov:08})
geometric features of posets, specifically their relations
to graphic hyperplane arrangements and acyclic orientations, emphasizing 
notions with toric counterparts.

Let $V$ be a finite set of cardinality $|V|=n$;  often we will choose $V=[n]:=\{1,2,\ldots,n\}$.  One can think of a {\it partially ordered
set} or {\it poset} $P$ on $V$ as a binary relation $i <_P j$ which is 
\begin{itemize}
\item {\it irreflexive}: $ i\not <_P i$,
\item {\it antisymmetric}: $i <_P j$ implies $ j \not <_P i $, and 
\item {\it transitive}: $i <_P j$ and $j <_P k$ implies $ i <_P k$.
\end{itemize}
However, one can also identify $P$ with a certain {\it open polyhedral cone} in $\R^V$
\begin{equation}
\label{cone-of-a-poset-defn}
c=c(P):=\{x \in \R^V: x_i < x_j \text{ if }i <_P j\}.
\end{equation}
Note that the cone $c$ determines the poset $P=P(c)$ as follows: $i <_P j$ if and only $x_i < x_j$ for all $x$ in $c$. 

Each such cone $c$ also arises as a connected component in the complement
of at least one {\it graphic hyperplane arrangement} for a graph $G$, and 
often arises in several such arrangements, as explained below.
Given a simple graph $G=(V,E)$,
the {\it graphic arrangement} $\AAA(G)$ 
is the union of all hyperplanes in $\R^V$ of the form 
$x_i=x_j$ where $\{i,j\}$ is in $E$.  Each point $x=(x_1,\ldots,x_n)$ in the {\it complement}
$\R^V \setmin \AAA(G)$ determines an {\it acyclic orientation} $\omega(x)$ 
of the edge set $E$:  for an edge $\{i,j\}$ in $E$, since
$x_i \neq x_j$, either 
\begin{itemize}
\item  $x_i < x_j$ and $\omega(x)$ directs $i \rightarrow j$, or
\item $x_j < x_i$ and $\omega(x)$ directs $j \rightarrow i$.
\end{itemize}
It is easily seen that the fibers of this map $\alpha_G: x \longmapsto \omega(x)$
are the connected components of the complement $\R^V \setmin \AAA(G)$,
which are open polyhedral cones called {\it chambers}.   Thus the map $\alpha_G$
induces a bijection between the set $\Acyc(G)$ 
of all acyclic orientations $\omega$
of $G$ and the set $\Chambers \AAA(G)$ of chambers $c$ of $\AAA(G)$:
\begin{equation}
\label{alpha-diagram}
\xymatrix{
\R^V \setmin \AAA(G) \ar@{>>}[dr] \ar[rr]^{\alpha_G} & & \Acyc(G) \\
 & \Chambers \AAA(G) \ar@{.>}[ur] &\\ 
}
\end{equation}
These two sets are well-known \cite[Theorem 7.1]{Greene:83}, \cite{Stanley:73} 
to have cardinality 
$$
|\Acyc(G)|=|\Chambers \AAA(G)|=T_G(2,0)
$$ 
where $T_G(x,y)$ is the {\it Tutte polynomial} of $G$ \cite{Tutte:54}.  

Posets are also determined by their extensions to
{\it total orders} $w_1 < \cdots < w_n$, which are indexed by 
permutations $w=(w_1,\ldots,w_n)$ of $V$.  The total orders index the chambers 
$$
c_w :=\{x \in \R^V: x_{w_1} < x_{w_2} < \cdots < x_{w_n}\}
$$
in the complement of the {\it complete graphic arrangement $\AAA(K_V)$},
also known as the {\it reflection arrangement of type $A_{n-1}$} or {\it braid arrangement}.
Given a poset $P$, its set $\LLL(P)$ of all {\it linear extensions} or
{\it extensions to a total order}
has the property that 
$$
\overline{c(P)} = \bigcup_{w \in \LLL(P)} \overline{c}_w
$$
where $\overline{(\cdot)}$ denotes topological closure.  Thus when one 
{\it fixes} the graph $G$, chambers $c$ (or posets $P(c)$)
arising as $\alpha_G^{-1}(\omega)$ for various $\omega$ in $\Acyc(G)$
are determined by their sets $\LLL(P(c))$ of linear extensions.

The same poset $P$ or chamber $c=c(P)$ generally 
arises in many graphic arrangements $\AAA(G)$,
as one varies the graph $G$, leading to 
ambiguity in its labeling by a pair $(G,\omega)$ with
$\omega$ in $\Acyc(G)$. Nevertheless, this ambiguity
is well-controlled, in that there are two canonical choices 
$
(\bar{G}(P),\bar{\omega}(P))
$
and
$
(\hat{G}^{\Hasse}(P),\omega^{\Hasse}(P))
$
with the following properties.
\begin{enumerate}
\item[$\bullet$] A graph $G$ has $c(P)$ occurring
in $\Chambers \AAA(G)$
if and only if
$
\hat{G}^{\Hasse}(P) \subseteq G  \subseteq \bar{G}(P)
$
where $\subseteq$ is inclusion of edge sets.
In this case, $\alpha_G(c(P))=\omega$ where 
$\omega$ is the restriction $\bar{\omega}(P)|_G$.

\item[$\bullet$] The map which sends 
$(G,\omega) \longmapsto (\bar{G}(P),\bar{\omega}(P))$ is {\it transitive closure}.
It adds into $G$ all edges $\{i,j\}$ which lie on some {\it chain} 
(= {\it totally ordered subset}) $C$ of $P$,
and directs $i \rightarrow j$ if $i <_C j$.
Alternatively phrased, transitive closure adds the
directed edge $i \rightarrow j$ to $(G,\omega)$ whenever there is
a directed path from $i$ to $j$ in $(G,\omega)$.
\end{enumerate}
The existence of a unique
{\it inclusion-minimal} choice $(\hat{G}^{\Hasse}(P),\omega^{\Hasse}(P))$, 
called the {\it Hasse diagram} for $P$, follows 
from this well-known fact \cite{Edelman:82, Edelman:02}:
the {\it transitive closure} $A \longmapsto \bar{A}$
on subsets $A$ of all possible oriented edges 
$
\overleftrightarrow{K}_V=\{ (i,j) \in V \times V : i \neq j \},
$  
is a {\it convex closure}, meaning that
\begin{equation}
\label{convex-closure-definition}
\text{ for } a \neq b \text{ with }
a,b \not\in \bar{A}\text{ and }
a \in \overline{A \cup \{b\}},
\text{ one has }b \notin \overline{A \cup \{a\}}.
\end{equation}

\subsection{Toric posets}

We do not initially define a toric poset $P$ on the finite set $V$
via some binary (or ternary) relation. Rather we 
define it in terms of chambers in a {\it toric graphic arrangement}
$\AAA_{\tor}(G)=\pi(\AAA(G))$,
the image of the graphic arrangement $\AAA(G)$ under the
quotient map $\R^V \overset{\pi}{\rightarrow} \R^V/\Z^V$.  These are
important examples of {\it unimodular toric arrangements} discussed by
Novik, Postnikov and Sturmfels in \cite[\S\S 4-5]{Novik:02}; see
also Ehrenborg, Readdy and Slone \cite{Ehrenborg:09}.

\begin{defn}
\label{toric-poset-defn}
A connected component $c$ of the complement 
$\R^V/\Z^V \setmin \AAA_{\tor}(G)$ is called a {\it toric chamber} for $G$;  
denote by $\Chambers \AAA_{\tor}(G)$
the set of all toric chambers of $\AAA_{\tor}(G)$.  

A {\it toric poset} $P$ is a set $c$ that arises as a toric chamber for
at least one graph $G$.  We will write $P=P(c)$ and $c=c(P)$, depending
upon the context.
\end{defn}

\begin{ex}
When $n=2$, so $V=\{1,2\}$, 
there are only two simple graphs $G=(V,E)$,
a graph $G_0$ with no edges and the complete graph $K_2$ with
a single edge $\{1,2\}$.
For both such graphs, the torus  $\R^2/\Z^2$ remains connected
after removing the arrangement $\AAA_{\tor}(G)$, and hence
they each have only one toric chamber; call these chambers $c_0(=\R^2/\Z^2)$
for the graph $G_0$, and $c(=\R^2/\Z^2\setmin \{x_1=x_2\})$
for the graph $K_2$.  They represent two different toric posets $P(c_0)$
and $P(c)$,
even though their topological closures $\bar{c}=\bar{c}_0(=c_0)=\R^2/\Z^2$
are the same.
\end{ex}

A point $x$ in $\R^V/\Z^V$ does not have uniquely defined coordinates 
$(x_1,\ldots,x_n)$.  However, it is well-defined to speak of the
{\it fractional part} $\modone{x_i}$,
that is, the unique representative of the class of $x_i$ in $\R/\Z$ that 
lies in $[0,1)$.  Therefore a point $x$ in $\R^V/\Z^V \setmin \AAA_{\tor}(G)$,
still induces an acyclic orientation $\omega(x)$ of $G$, as follows:
for each edge $\{i,j\}$ in $E$, since
$x_i \neq x_j \bmod{\Z}$, either 
\begin{itemize}
\item  $\modone{x_i} < \modone{x_j}$, and $\omega(x)$ directs $i \rightarrow j$, or
\item $\modone{x_j} < \modone{x_i}$, and $\omega(x)$ directs $j \rightarrow i$.
\end{itemize}
Denote this map $x \mapsto \omega(x)$ by
$
\R^V/\Z^V \setmin \AAA_{\tor}(G)
 \overset{\bar{\alpha}_G}{\longrightarrow}
   \Acyc(G).
$
Unfortunately, two points lying in the same toric chamber 
$c$ in $\Chambers_{\tor} \AAA_{\tor}(G)$ need not
map to the same acyclic orientation under $\bar{\alpha}_G$.
This ambiguity leads one naturally to the 
following equivalence relation on acyclic orientations.

\begin{defn}
  When two acyclic orientations $\omega$ and $\omega'$ of $G$ differ
  only by converting one source vertex of $\omega$ into a sink of $\omega'$, 
  say that they differ by a \emph{flip}.
  The transitive closure of the flip operation generates an
  equivalence relation on $\Acyc(G)$ denoted by $\equiv$.
\end{defn}

A thorough investigation of this source-to-sink flip operation and
equivalence relation was 
undertaken by Pretzel in~\cite{Pretzel:86}, and
studied earlier by Mosesjan \cite{Mosesjan:72}.
It has also appeared at other times in various contexts\footnote{
Pretzel called the source-to-sink flip {\it pushing down maximal
vertices}; in~\cite{Macauley:11}, it was called a \emph{click}. 
In the category of representations of a quiver, it is 
related to Bernstein, Gelfand and Ponomarev's 
{\it reflection functors} \cite{Bernstein:73}.}
in the literature~\cite{Chen:10,Eriksson:09,Macauley:11,Speyer:09}.
Its relation to geometry of toric chambers $c=c(P)$ or
toric posets $P=P(c)$ is our first main result, 
proven in \S~\ref{geometry-section}.

\begin{thm}
\label{geometric-interpretation-theorem}
The map $\bar{\alpha}_G$ induces a bijection
between $\Chambers \AAA_{\tor}(G)$ and $\Acyc(G)/\!\!\equiv$ as follows:

\begin{equation}
\label{alpha-floor-diagram}
\xymatrix{
\R^V/\Z^V \setmin \AAA_{\tor}(G) 
  \ar@{>>}[d] \ar[r]^{\bar{\alpha}_G} & \Acyc(G)  \ar@{>>}[d] \\
 \Chambers \AAA_{\tor}(G) \ar@{.>}[r]_{\bar{\alpha}_G} & \Acyc(G)/\!\!\equiv
}
\end{equation}
In other words, two points $x,x'$ in $\R^V/\Z^V \setmin \AAA_{\tor}(G)$
have $\bar{\alpha}_G(x)\equiv\bar{\alpha}_G(x')$ if and only if
$x,x'$ lie in the same toric chamber $c$ in $\Chambers \AAA_{\tor}(G)$.
\end{thm}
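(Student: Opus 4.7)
The map $\bar\alpha_G$ is well-defined pointwise, so the task is to verify it descends to a bijection between $\Chambers\AAA_{\tor}(G)$ and $\Acyc(G)/\!\!\equiv$. I would proceed in three steps: (i) two points in a common toric chamber have $\equiv$-equivalent orientations; (ii) two orientations related by a single flip arise from a common toric chamber; (iii) every $\omega\in\Acyc(G)$ is realized. Step (iii) is routine: given $\omega$, choose a linear extension $v_1,\ldots,v_n$ of the poset it generates, set $\modone{x_{v_k}}=k/(n+1)$, and observe that $\bar\alpha_G(x)=\omega$.

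For step (i), I would exploit the auxiliary toric arrangement obtained by adjoining to $\AAA_{\tor}(G)$ the $n$ ``zero hyperplanes'' $H_i=\{x\in\R^V/\Z^V:\modone{x_i}=0\}$. Since $\bar\alpha_G$ is determined by the ordering of the fractional parts $\modone{x_i}$, the map is constant on connected components of the complement of the enlarged arrangement $\AAA_{\tor}(G)\cup H_1\cup\cdots\cup H_n$. Connect $x$ and $x'$ by a path $\gamma\colon[0,1]\to\R^V/\Z^V\setmin\AAA_{\tor}(G)$ inside the toric chamber. After a standard transversality perturbation, $\gamma$ meets each $H_i$ only at finitely many times, and no two $H_i$ are met simultaneously; between successive meetings the orientation $\bar\alpha_G(\gamma(t))$ is then constant. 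At a time $t_k$ when $\gamma$ transversally crosses a single $H_i$, every neighbor $j$ of $i$ in $G$ has $\modone{\gamma_j(t_k)}$ bounded away from $0$, so $\modone{\gamma_i}$ wraps from just above $0$ to just below $1$ (or vice versa) past all neighbor values at once: every edge incident to $i$ reverses, and $i$ changes from a source to a sink or conversely. This is precisely (the inverse of) a source-to-sink flip at $i$, so the $\equiv$-class is preserved across each crossing, giving $\bar\alpha_G(x)\equiv\bar\alpha_G(x')$.

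For step (ii), let $\omega'$ be obtained from $\omega$ by flipping a source $i$ of $\omega$. Pick any $x$ with $\bar\alpha_G(x)=\omega$, so $\modone{x_i}<\modone{x_j}$ for each neighbor $j$ of $i$. Fix all coordinates except $x_i$, and move $[x_i]$ along the arc of $\R/\Z$ that avoids the finite set $\{[x_j]:j\text{ a neighbor of }i\}$ and contains $[x_i]$. The neighbor representatives $\modone{x_j}$ all lie in the interval $(\modone{x_i},1)$, so this arc passes through $0$ and terminates in $(\max_j\modone{x_j},1)$; any endpoint $x'$ in that terminal sub-arc satisfies $\modone{x'_i}>\modone{x'_j}$ for every neighbor $j$, hence $\bar\alpha_G(x')=\omega'$, and the path stays in $\R^V/\Z^V\setmin\AAA_{\tor}(G)$ by construction. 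Thus $x,x'$ lie in the same toric chamber, and iterating along any sequence of flips connecting $\omega$ to $\omega'$ completes the argument.

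The main obstacle is ensuring that the perturbation in step (i) yields transverse, isolated, non-simultaneous crossings of the hyperplanes $H_i$ while remaining inside the open toric chamber. This is routine transversality in $\R^V/\Z^V$, since the bad configurations form a codimension-$\ge 2$ subset, but one must check that the perturbation can be carried out without accidentally pushing $\gamma$ across $\AAA_{\tor}(G)$---this is possible because the toric chamber is open. The conceptual content of the proof is the observation that a single $H_i$-crossing executes precisely a flip at vertex $i$.
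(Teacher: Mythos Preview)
Your three-step plan mirrors the paper's structure: well-definition (your (i)), surjectivity (your (iii)), and the key lemma that a source-to-sink flip can be realized by a path inside the chamber (your (ii), which is exactly the paper's Lemma~\ref{lem:key}). For (i) the paper lifts to the affine arrangement $\AAA_{\aff}(G)=\pi^{-1}(\AAA_{\tor}(G))$ and reduces to coordinate-direction paths crossing a single integer hyperplane $x_i\in\Z$; this is the same geometric content as your ``zero hyperplanes'' $H_i=\{\modone{x_i}=0\}$, but sidesteps the transversality perturbation entirely.

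There is, however, one genuine missing step. Your (ii) shows: given $x$ with $\bar\alpha_G(x)=\omega$ and any $\omega'\equiv\omega$, there exists \emph{some} $x'$ in the same chamber as $x$ with $\bar\alpha_G(x')=\omega'$. To deduce injectivity you still need that any two points with the \emph{same} orientation lie in the same chamber, i.e., that each fiber $\bar\alpha_G^{-1}(\omega)$ is connected. Otherwise two distinct chambers $c_1,c_2$ could each contain a preimage of $\omega$, and iterating (ii) starting from either side never forces them to coincide---you only learn that $c_1$ and $c_2$ each meet $\bar\alpha_G^{-1}(\omega')$ for every $\omega'\in[\omega]$. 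The paper supplies exactly this piece at the end of its injectivity argument: lift $x,x'$ with $\bar\alpha_G(x)=\bar\alpha_G(x')=\omega$ to $\hat x,\hat x'\in[0,1)^n$ via fractional parts, and observe that the straight-line segment between them stays in the convex region $\{y\in[0,1)^n: y_i<y_j\text{ whenever }\omega\text{ directs }i\to j\}$, hence avoids $\AAA_{\aff}(G)$. This is a one-line fix, but without it your argument does not close.
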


\noindent
The two sets $\Chambers \AAA_{\tor}(G)$ and $\Acyc(G)/\!\!\equiv$ appearing
in the theorem are known to have cardinality
$$
|\Acyc(G) / \!\! \equiv |=|\Chambers \AAA_{\tor}(G)|=T_G(1,0)
$$ 
where $T_G(x,y)$ is the Tutte polynomial of $G$;
see \cite{Macauley:08} and \cite[Theorem 4.1]{Novik:02}.

\begin{ex}
A {\it tree} $G$ on $n$ vertices has Tutte polynomial
$T_G(x,y)=x^{n-1}$.  It will have $T(2,0)=2^{n-1}$ 
acyclic orientations $\omega$ and induced partial orders,
but only $T(1,0)=1$ toric chamber or toric partial order:
any two acyclic orientations of a tree are equivalent by a 
sequence of source-to-sink moves.
\end{ex}

\begin{ex}
\label{four-cycle-toric-posets-example}
As a less drastic example, consider 
$V=\{1,2,3,4\}$ and $G=(V,E)$ this graph:
$$
\tiny{
\xymatrix{ 
1\ar@{-}[r] & 2\ar@{-}[d] \\
3\ar@{-}[u]& 4\ar@{-}[l]
}
}
$$
It has Tutte polynomial $T_G(x,y)=x^3+x^2+x+y$, and hence
has $T_G(2,0)=2^3+2^2+2+0=14$ acyclic orientations $\omega$.
These $\omega$ fall into $T_G(1,0)=1^3+1^2+1+0=3$ different $\equiv$-classes 
$[\omega]$, having cardinalities $4,4,6$, respectively,
corresponding to three different toric posets $P_i$ 
or chambers $c_i$ in $\Chambers \AAA_{\tor}(G)$:
$$
\tiny{
\begin{array}{rcccc}
P_1:
&
\xymatrix{ 
4                & \\
                 & 3 \ar[ul] \\
                 & 2 \ar[u] \\
1\ar[ur] \ar[uuu]
}
&
\xymatrix{ 
1                & \\
                 & 4 \ar[ul] \\
                 & 3 \ar[u] \\
2\ar[ur] \ar[uuu]
}
&
\xymatrix{ 
2                & \\
                 & 1 \ar[ul] \\
                 & 4 \ar[u] \\
3\ar[ur] \ar[uuu]
}
&
\xymatrix{ 
3                & \\
                 & 2 \ar[ul] \\
                 & 1 \ar[u] \\
4\ar[ur] \ar[uuu]
}
\\
& & & &  \\
& & & &  \\
& & & &  \\
P_2:
&
\xymatrix{ 
1                & \\
                 & 2 \ar[ul] \\
                 & 3 \ar[u] \\
4\ar[ur] \ar[uuu]
}
&
\xymatrix{ 
2                & \\
                 & 3 \ar[ul] \\
                 & 4 \ar[u] \\
1\ar[ur] \ar[uuu]
}
&
\xymatrix{ 
3                & \\
                 & 4 \ar[ul] \\
                 & 1 \ar[u] \\
2\ar[ur] \ar[uuu]
}
&
\xymatrix{ 
4                & \\
                 & 1 \ar[ul] \\
                 & 2 \ar[u] \\
3\ar[ur] \ar[uuu]
}
\\
& & & &  \\
& & & &  \\
& & & &  \\
P_3:
&
\xymatrix{
        & 1              & \\
2\ar[ur]&                &4\ar[ul] \\
        & 3\ar[ur]\ar[ul]&
}
& 
\xymatrix{
 2              & 4 \\
 1\ar[u]\ar[ur] & 3\ar[u]\ar[ul]
}
&
\xymatrix{
        & 2              & \\
1\ar[ur]&                &3\ar[ul] \\
        & 4\ar[ur]\ar[ul]&
}
& \\
 & & 
\xymatrix{
        & 3              & \\
2\ar[ur]&                &4\ar[ul] \\
        & 1\ar[ur]\ar[ul]&
}
& 
\xymatrix{
 1              & 3 \\
 2\ar[u]\ar[ur] & 4\ar[u]\ar[ul]
}
&
\xymatrix{
        & 4              & \\
1\ar[ur]&                &3\ar[ul] \\
        & 2\ar[ur]\ar[ul]&
} 
\end{array}
}
$$

\end{ex}

{\it Toric total orders} (see \S~\ref{total-orders-section}) 
are indexed by the $(n-1)!$ cyclic equivalence classes of permutations 
\begin{equation}
\begin{array}{rcl}
\label{typical-cyclic-permutation}
[w]:=[(w_1,w_2,\ldots,w_n)]=\left\{\right. &(w_1,w_2,\ldots,w_{n-1},w_n), & \\
             &(w_2,\ldots,w_{n-1},w_n,w_1), &\\
             &\vdots&\\
             &(w_n,w_1,w_2,\ldots,w_{n-1}) &\left.\right\}
\end{array}
\end{equation}
and correspond to the toric chambers $c_{[w]}$
in the complement of the {\it toric complete graphic arrangement $\AAA_{\tor}(K_V)$}.
For a particular toric poset $P=P(c)$, 
one says that $[w]$ is a {\it toric total
extension} of $P$ if $c_{[w]} \subseteq c$.  Denote by $\LLL_{\tor}(P)$ the
set of all such toric total extensions $[w]$ of $P$.  
Although it is possible (see Example~\ref{lack-of-determination-by-extensions-example} below)
for two different toric posets $P$ to have the same set $\LLL_{\tor}(P)$,
the following assertion (combining 
Proposition~\ref{edge-subgraph-chamber-refinement} and
Corollary~\ref{determination-by-total-cyclic-extensions} below) still holds.

\begin{prop}
When one {\it fixes} the graph $G$, the toric chamber $c$ 
(or its poset $P=P(c)$) for which $\bar{\alpha}_G(c)=[\omega]$ 
is completely determined by its topological
closure $\overline{c}$.
Furthermore one has
$
\overline{c} = \bigcup_{w \in \LLL_{\tor}(P)} \overline{c}_{[w]}.
$
so that this closure depends only on the set of toric total extensions
$\LLL_{\tor}(P)$.

\end{prop}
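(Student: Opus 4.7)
The plan is to follow the classical argument for ordinary posets, using two geometric facts: first, since $\AAA_{\tor}(G) \subseteq \AAA_{\tor}(K_V)$, the toric braid arrangement refines $\AAA_{\tor}(G)$, so each chamber of $\AAA_{\tor}(G)$ is a union of chambers of $\AAA_{\tor}(K_V)$ glued along pieces of $\AAA_{\tor}(K_V) \setmin \AAA_{\tor}(G)$; and second, since $c$ is a connected component of $\R^V/\Z^V \setmin \AAA_{\tor}(G)$, the topological boundary $\overline{c} \setmin c$ lies inside $\AAA_{\tor}(G)$.

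For the first assertion (determination of $c$ by $\overline{c}$), I would prove the stronger equality $c = \overline{c} \setmin \AAA_{\tor}(G)$, which recovers $c$ from $\overline{c}$ since $G$ is fixed. The inclusion $\subseteq$ is immediate, as $c$ is open and disjoint from $\AAA_{\tor}(G)$. For $\supseteq$, take $p \in \overline{c} \setmin \AAA_{\tor}(G)$ and let $c''$ be the toric chamber of $\AAA_{\tor}(G)$ containing $p$. Any other $q \in c''$ is path-connected to $p$ inside $c''$, and such a path cannot leave $\overline{c}$, since doing so would force it to cross $\overline{c} \setmin c \subseteq \AAA_{\tor}(G)$, while the path itself lies in $c''$ and is therefore disjoint from $\AAA_{\tor}(G)$. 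Hence $c'' \subseteq \overline{c}$; but if $c'' \neq c$ then $c''$ would be a chamber disjoint from $c$ yet contained in $\overline{c} \setmin c \subseteq \AAA_{\tor}(G)$, contradicting $c'' \cap \AAA_{\tor}(G)=\emptyset$. Therefore $c'' = c$ and $p \in c$.

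For the second assertion, observe that $\AAA_{\tor}(G) \subseteq \AAA_{\tor}(K_V)$ forces every toric chamber $c_{[w]}$ of $\AAA_{\tor}(K_V)$ to lie inside a unique toric chamber of $\AAA_{\tor}(G)$, and by definition of $\LLL_{\tor}(P)$, one has $c_{[w]} \subseteq c$ precisely when $[w] \in \LLL_{\tor}(P)$. Consequently
$$
c \setmin \AAA_{\tor}(K_V) \;=\; \bigsqcup_{[w] \in \LLL_{\tor}(P)} c_{[w]}.
$$
The extra hyperplanes of $\AAA_{\tor}(K_V) \setmin \AAA_{\tor}(G)$ are still codimension one in the torus, so the left-hand side is dense in $c$, hence dense in $\overline{c}$. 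Taking closures and using that the closure of a finite union is the union of closures gives $\overline{c} = \bigcup_{[w] \in \LLL_{\tor}(P)} \overline{c}_{[w]}$, completing the argument.

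The main subtlety, compared with the polyhedral-cone setting for ordinary posets, is that toric chambers are not convex, so one cannot simply invoke ``the relative interior of a closed polyhedron'' to extract $c$ from its closure; this is what forces the topological/connectedness argument in the first part. Once that step is in place, the refinement argument in the second part is the direct toric analogue of the classical identity $\overline{c(P)} = \bigcup_{w \in \LLL(P)} \overline{c}_w$ recalled in the introduction.
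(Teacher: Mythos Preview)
Your proof is correct, and your treatment of the first assertion is essentially the same idea as the paper's: both hinge on the observation that $\overline{c}\setmin c \subseteq \AAA_{\tor}(G)$, so a point of $\overline{c}$ lying off the arrangement must already lie in $c$. (In fact your path argument for $\supseteq$ is more than you need: once $p \in \overline{c}$ and $p \notin \AAA_{\tor}(G) \supseteq \overline{c}\setmin c$, you immediately get $p \in c$ without invoking $c''$ or connectedness.) Your formula $c = \overline{c}\setmin \AAA_{\tor}(G)$ is a slightly sharper packaging of the same fact.

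Where you genuinely diverge from the paper is in the second assertion. The paper proves the more general statement that for any edge-supergraph $G' \supseteq G$, each closed $G$-chamber is the union of the closures of the $G'$-chambers it contains, and does so by \emph{lifting} to the affine cover $\R^V$: there $\pi^{-1}(c)$ is a union of honest polyhedra (affine chambers), and the refinement by the extra hyperplanes of $\AAA_{\aff}(G')$ is the usual polyhedral subdivision, after which one pushes back down via $\overline{c}=\pi\bigl(\overline{\pi^{-1}(c)}\bigr)$. Your argument instead stays on the torus and uses a density step: $c \setmin \AAA_{\tor}(K_V)$ is dense in $c$ because you have removed finitely many codimension-one subtori, and then closure of a finite union is the union of closures. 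This is more elementary and avoids the covering-space machinery entirely; the paper's lifting approach, on the other hand, makes transparent why the same statement holds for arbitrary $G' \supseteq G$, not just $G'=K_V$, and ties directly into the affine-chamber framework used elsewhere in the paper.
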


\begin{ex}
The graph $G$ from Example~\ref{four-cycle-toric-posets-example}
and its three toric posets $P_1,P_2,P_3$ partition
the $(4-1)!=6$ different toric total orders on $V=\{1,2,3,4\}$
into their sets of toric total extensions $\LLL_{\tor}(P_i)$ as follows:
$$
\begin{aligned}
\LLL_{\tor}(P_1) 
  & = \{ [(1,2,3,4)] \},\\
\LLL_{\tor}(P_2) 
  & = \{ [(1,4,3,2)] \},\\
\LLL_{\tor}(P_3) 
  & = \{ [(1,2,4,3)],[(1,3,2,4)],  [(1,3,4,2)],[(1,4,2,3)]\}.
\end{aligned}
$$
\end{ex}

As with posets, the same toric poset $P=P(c)$ arises as a chamber $c$
in {\it many} toric graphic arrangements $\AAA_{\tor}(G)$.
However, as with posets, this ambiguity is well-controlled,
in that there are two canonical choices of equivalence classes
$
(\bar{G}^{\tor}(P), [\bar{\omega}^{\tor}(P)])
$
and
$
(\hat{G}^{\torHasse}(P), [\omega^{\torHasse}(P)])
$
with the following properties.
\begin{enumerate}

\item[$\bullet$] A graph $G$ has $c(P)$ occurring in
$\Chambers \AAA_{\tor}(G)$ if and only if
$$
\hat{G}^{\torHasse}(P) \subseteq G  \subseteq \bar{G}^{\tor}(P)
$$
where $\subseteq$ is inclusion of edges.  In this case, 
if $\bar{\alpha}_G(c(P))=[\omega]$, then
$\omega$ can be taken to be the
restriction to $G$ of a particular orientation 
in the class $[\bar{\omega}^{\tor}(P)]$.

\item[$\bullet$] The map which sends 
$(G,\omega) \longmapsto (\bar{G}^{\tor}, \bar{\omega}^{\tor})$ may be described by what
will be called (in \S~\ref{transitivity-section}) {\it toric transitive closure}:
one adds into $G$ all edges $\{i,j\}$ which lie on some {\it toric chain} $C$ in $P$.
Here a toric chain (see \S~\ref{chains-section}) is a subset $C \subset V$
which is totally ordered in {\it every} poset associated with
an orientation in the class $[\omega]$.  
One directs $i \rightarrow j$ if there is a 
{\it toric directed path} from $i$ to $j$ in $(G,\omega)$, as defined 
in \S~\ref{directed-paths-section} below.
Alternatively phrased, toric transitive closure will add the
directed edge $i \rightarrow j$ to $(G,\omega)$ whenever there is
a toric directed path from $i$ to $j$ in $(G,\omega)$.
\end{enumerate}
The existence of the unique
{\it inclusion-minimal} choice $(\hat{G}^{\torHasse}(P),[\omega^{\torHasse}(P)])$, 
which we will call the {\it toric Hasse diagram} of $P$, follows 
from our second main result, proven in \S~\ref{convex-closure-section}.

\begin{thm}
\label{toric-convex-closure-theorem}
Considered as a {\it closure operation} $A \longmapsto \bar{A}^{\tor}$
on subsets $A$ of all possible oriented edges 
$
\overleftrightarrow{K}_V=\{ (i,j) \in V \times V : i \neq j \},
$
toric transitive closure is a {\it convex closure}, that is,
it satisfies \eqref{convex-closure-definition} above.
\end{thm}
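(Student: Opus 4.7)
The plan is to prove the anti-exchange condition \eqref{convex-closure-definition} by way of the combinatorial description of $\bar{A}^{\tor}$ in terms of \emph{toric directed paths}, to be developed in \S\ref{directed-paths-section}: an oriented edge $(i,j)$ lies in $\bar{A}^{\tor}$ if and only if there is a toric directed path from $i$ to $j$ whose oriented edges all lie in $A$. This gives a concrete, path-theoretic handle on the closure.

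Fix distinct oriented edges $a = (i_1, i_2)$ and $b = (j_1, j_2)$ with $a, b \notin \bar{A}^{\tor}$ and $a \in \overline{A \cup \{b\}}^{\tor}$. Then some toric directed path $\pi_a$ from $i_1$ to $i_2$ has edges in $A \cup \{b\}$, and it must traverse $b$, for otherwise $a \in \bar{A}^{\tor}$. Suppose toward a contradiction that also $b \in \overline{A \cup \{a\}}^{\tor}$, so there is a toric directed path $\pi_b$ from $j_1$ to $j_2$ with edges in $A \cup \{a\}$ which must traverse $a$. Decomposing at the first traversals, write $\pi_a = \pi_a' \cdot b \cdot \pi_a''$ and $\pi_b = \pi_b' \cdot a \cdot \pi_b''$, so that the four segments $\pi_a', \pi_a'', \pi_b', \pi_b''$ all lie in $A$, with $\pi_a'$ going $i_1 \to \cdots \to j_1$, $\pi_a''$ going $j_2 \to \cdots \to i_2$, $\pi_b'$ going $j_1 \to \cdots \to i_1$, and $\pi_b''$ going $i_2 \to \cdots \to j_2$. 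Splicing these in various ways yields directed loops in $A$ through $\{i_1,j_1\}$ and through $\{i_2,j_2\}$; combined with the $A$-segments of $\pi_a$ and $\pi_b$, these loops should produce a toric directed path in $A$ alone from $i_1$ to $i_2$ or from $j_1$ to $j_2$, contradicting $a, b \notin \bar{A}^{\tor}$.

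The principal obstacle is showing that the spliced objects are genuine \emph{toric} directed paths and not just ordinary directed paths. A toric directed path carries winding data around $\R^V / \Z^V$, and concatenation need not preserve this data automatically, so the splicing must be paired with a careful accounting of winding numbers (equivalently, of relative heights after a lift to $\R^V$). I expect the proof will either track an additive signed-length invariant along each path and verify that the spliced path has the signed length required for membership in $\bar{A}^{\tor}$, or alternatively work in the universal cover $\R^V$, where one can invoke the known convexity of ordinary transitive closure together with the characterization of toric chambers as $\Z^V$-translation classes of ordinary chambers to transfer anti-exchange to the toric setting.
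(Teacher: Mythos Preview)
Your proposal rests on a misreading of what a \emph{toric directed path} is, and this breaks the argument at the outset. In Definition~\ref{toric-directed-path-defn}, a toric directed path $(i_1,\ldots,i_m)$ requires \emph{all} of the short edges $i_1\to i_2\to\cdots\to i_m$ \emph{together with} the long edge $i_1\to i_m$ to be present in $A$. Consequently your characterization of $\bar A^{\tor}$ is off: one has $(i,j)\in\bar A^{\tor}$ not when there is a toric directed path ``from $i$ to $j$'' (if $i_1=i$ and $i_m=j$ then the long edge $(i,j)$ would already lie in $A$), but when $i$ and $j$ occur, with $i$ preceding $j$, somewhere along a toric directed path whose $m$ edges all lie in $A$. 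Your decomposition $\pi_a=\pi_a'\cdot b\cdot\pi_a''$ therefore does not make sense: the new edge $b$ may be either the long edge or a short edge of the witnessing toric directed path $C$, and deleting it does not leave two concatenable sub-paths but rather a directed path plus a dangling long edge (or, if $b$ was the long edge, an ordinary directed path that is no longer toric). The subsequent ``splicing'' of $\pi_a',\pi_a'',\pi_b',\pi_b''$ and the appeal to winding numbers on the torus are then built on sand; there is no winding invariant carried by a toric directed path, and the toric feature is purely the combinatorial requirement of the extra long edge.

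The paper's proof confronts exactly this difficulty. It introduces the long/short edge terminology and argues by cases: Case~1 treats the situation where one of the new edges is the long edge of its witnessing path, and Case~2 the situation where both are short edges, the latter further subdivided according to how the ordinary transitive closure $Q$ of $\bar A^{\tor}\cup\{(i,j),(k,\ell)\}$ totally orders the four endpoints. In each subcase one either forces $(i,j)=(k,\ell)$ or exhibits a toric directed path already present in $\bar A^{\tor}$, giving the contradiction. The case analysis is unavoidable precisely because the long edge behaves so differently from the short edges under deletion, which is the phenomenon your splicing argument overlooks.
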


\begin{ex}
The toric poset $P_1=P(c_1)$ from Example~\ref{four-cycle-toric-posets-example} 
appears as a chamber $c_1$ in $\Chambers \AAA_{\tor}(G_i)$ for
exactly four graphs $G_1,G_2,G_3,G_4$, each shown below with an
orientation $\omega_i$ such that $\bar{\alpha}_{G_i}(c_1)=[\omega_i]$.
$$
\tiny{
\xymatrix{ 
4                & \\
                 & 3 \ar[ul] \\
                 & 2 \ar[u] \\
1\ar[ur] \ar[uuu]
}
\qquad
\xymatrix{ 
4                & \\
                 & 3 \ar[ul] \\
                 & 2 \ar[u]\ar[uul] \\
1\ar[ur] \ar[uuu]
}
\qquad
\xymatrix{ 
4                & \\
                 & 3 \ar[ul] \\
                 & 2 \ar[u] \\
1\ar[ur] \ar[uuu] \ar[uur]
}
\qquad
\xymatrix{ 
4                & \\
                 & 3 \ar[ul] \\
                 & 2 \ar[u] \ar[uul]\\
1\ar[ur] \ar[uuu] \ar[uur]
}
}
$$
For any of these four pairs $(G_i,\omega_i)$ with $i=1,2,3,4$, one has that the
leftmost pair is its Hasse diagram $(\hat{G_i}^{\torHasse},\omega_i^{\torHasse})$,
and the rightmost pair is its toric transitive closure
$(\bar{G}^{\tor}_i,\bar{\omega}_i^{\tor})$.
\end{ex}

We close this Introduction with two remarks, one on terminology, 
the other giving further motivation.

\begin{rem}
\label{disambiguation-remark}
Aside from the connection to toric hyperplane arrangements,
we have chosen the name ``toric partial order'', 
as opposed to the arguably more natural term 
``cyclic partial order'', because the latter is
easily confused with {\it partial cyclic orders}, the following
pre-existing concept in the literature, going
back at least as far as Megiddo~\cite{Megiddo:76}.
\begin{defn}
  A \emph{partial cyclic order} on $V$ is a ternary relation $T \subseteq V\times V \times V$
  that is  
\begin{itemize}
  \item \emph{antisymmetric}: If $(i,j,k)\in T$ then $(k,j,i)\not\in T$;
  \item \emph{transitive}: If $(i,j,k)\in T$ and $(i,k,\ell)\in T$, then
    $(i,j,\ell)\in T$;
  \item \emph{cyclic}: If $(i,j,k)\in T$, then $(j,k,i)\in T$.
\end{itemize}
\end{defn}

\begin{defn}
\label{total-cyclic-order-defn}
When a partial cyclic order on $V$ is \emph{complete} in the sense that
for every triple $\{i,j,k\} \subseteq V$ of distinct elements, 
$T$ contains some permutation of $(i,j,k)$, then $T$ is called a \emph{total cyclic order}.  A total cyclic order on $V$ is easily seen to be the same a toric total order: specify a cyclic equivalence class $[w]$ as 
in \eqref{typical-cyclic-permutation}, and then check that $[w]$ is
determined by knowing its restrictions $[w|_{\{i,j,k\}}]$
for all triples $\{i,j,k\}$. 
\end{defn}

Partial cyclic orders have been widely studied, and
have some interesting features not shared by ordinary partial
orders. For example, every partial order can be extended to a total
order, but not every partial cyclic order can be extended 
to a total cyclic order;  an example of this on $13$ vertices
is given in \cite{Megiddo:76}.
\end{rem}

\begin{rem}
We mention a further analogy between posets and toric posets,
related to Coxeter groups, that was 
one of our motivations for formalizing this concept.

Recall \cite{Bourbaki:02} that a {\it Coxeter system} $(W,S)$ is a group $W$ 
with generating set $S=\{s_1,\ldots,s_n\}$ having presentation
$
W= \langle S : (s_i s_j)^{m_{i,j}}=e \rangle
$
for some $m_{i,j}$ in $\{1,2,3,\ldots\} \cup \{\infty\}$,
where $m_{i,i}=1$ for all $i$ and $m_{i,j} \geq 2$ for $i \neq j$.
Associated to $(W,S)$ is the {\it Coxeter graph} on vertex set $S$
with an edge $\{s_i,s_j\}$ labeled by $m_{i,j}$ whenever $m_{i,j} > 2$, so that
$s_i,s_j$ do not commute; ignoring the edge labels, we will call this
the unlabeled Coxeter graph.  
A {\it Coxeter element} for $(W,S)$ is an element of
the form $s_{w_1} s_{w_2} \cdots s_{w_n}$ for some choice of a total
order $w$ on $S$.

\begin{thm} Fix a Coxeter system $(W,S)$ with unlabeled Coxeter
graph $G$, and consider the map sending an 
acyclic orientation $\omega$ in $\Acyc(G)$ having poset $P=\alpha_G(\omega)$
to the Coxeter element $s_{w_1} s_{w_2} \cdots s_{w_n}$ for any choice
of a linear extension $w$ in $\LLL(P)$.  
\begin{enumerate}
\item[(i)] 
This map is well-defined, and induces a bijection 
(see \cite[\S V.6]{Bourbaki:02} and \cite{Cartier:69})
$$
\Acyc(G) 
\longleftrightarrow 
\{ \text{ Coxeter elements for }(W,S) \,\, \}.
$$ 
\item[(ii)] 
It also induces a well-defined map on the toric equivalence classes 
$[\omega]$ to the {\bf $W$-conjugacy classes} of all Coxeter elements,
and gives a bijection (see \cite{Eriksson:09, Macauley:08, Macauley:11, Shi:97}
and \cite[Remark 5.5]{Novik:02})
$$
\Acyc(G)/\!\!\equiv 
\quad \longleftrightarrow \quad
\{ W\text{-conjugacy classes of Coxeter elements for }(W,S) \}.
$$ 
\end{enumerate}
\end{thm}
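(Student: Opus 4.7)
The plan is to reduce both parts of the theorem to two classical facts: (a) any two linear extensions of a finite poset $P$ are connected by a sequence of transpositions of adjacent pairs of incomparable elements; and (b) for a Coxeter system, any two reduced expressions of the same element differ by a sequence of braid moves (Matsumoto--Tits), and for a Coxeter element only the commutation moves $s_is_j=s_js_i$ coming from non-edges $\{s_i,s_j\}\notin G$ can arise, since any longer braid relation would require some generator to appear at least twice. One also needs the classical fact that any expression $s_{w_1}\cdots s_{w_n}$ for a Coxeter element is already reduced, i.e.\ has length $n=|S|$; see \cite[\S V.6]{Bourbaki:02}.

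For part (i), well-definedness goes as follows. If $w,w' \in \LLL(P)$, then by (a) they are connected by transpositions of adjacent pairs $(w_k,w_{k+1})$ incomparable in $P=\alpha_G(\omega)$. Incomparability forces $\{w_k,w_{k+1}\}$ to be a non-edge of $G$, whence $m_{w_k,w_{k+1}}=2$, so $s_{w_k}$ commutes with $s_{w_{k+1}}$ and the product $s_{w_1}\cdots s_{w_n}$ is unchanged. Surjectivity is immediate: every Coxeter element arises from some total order $w$ on $S$, and $w$ is a linear extension of the poset $\alpha_G(\omega(w))$ it induces. For injectivity, suppose $s_{w_1}\cdots s_{w_n}=s_{w'_1}\cdots s_{w'_n}$. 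Both sides are reduced by the length fact, so by (b) the two words are connected only by commutation moves of pairs from non-edges of $G$. Retranslating to $\LLL$, this says exactly that $w$ and $w'$ agree on the relative order of every pair $\{i,j\}\in G$, so they induce the same acyclic orientation of $G$.

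For part (ii), well-definedness on $\equiv$-classes follows from the identity
$$
s_{w_2}\cdots s_{w_n} s_{w_1} \;=\; s_{w_1}^{-1}\,(s_{w_1} s_{w_2}\cdots s_{w_n})\, s_{w_1},
$$
since a source-to-sink flip at a source vertex $w_1$ of $\omega(w)$ turns the linear extension $(w_1,w_2,\ldots,w_n)$ into the linear extension $(w_2,\ldots,w_n,w_1)$ of the flipped orientation. Surjectivity is again immediate. The main obstacle is injectivity: if two Coxeter elements $c,c'$ are $W$-conjugate, one must exhibit a chain of source-to-sink flips between pre-image orientations. I would invoke the theorem (see \cite{Eriksson:09, Macauley:08, Macauley:11, Shi:97} and \cite[Rem.~5.5]{Novik:02}) that two Coxeter elements are conjugate in $W$ if and only if their reduced expressions can be connected by a sequence of cyclic shifts together with commutation moves from (b). Cyclic shifts correspond to source-to-sink flips via the conjugation identity above, and commutation moves fix the orientation by part (i); combining these yields the desired bijection.
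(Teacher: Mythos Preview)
The paper does not actually supply its own proof of this theorem: it is stated inside a motivational remark at the end of the Introduction, with the assertions in (i) and (ii) each accompanied by ``see'' citations to the literature (\cite[\S V.6]{Bourbaki:02}, \cite{Cartier:69} for (i); \cite{Eriksson:09, Macauley:08, Macauley:11, Shi:97} and \cite[Remark~5.5]{Novik:02} for (ii)), and nothing further. So there is no ``paper's own proof'' to compare your proposal against.

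That said, your sketch is a correct and standard unpacking of what those citations contain. Your treatment of (i) is exactly the usual Cartier--Foata/Bourbaki argument: adjacent-transposition connectivity of linear extensions gives well-definedness, and Matsumoto--Tits plus the observation that a length-$n$ reduced word in distinct generators admits only commutation braid moves gives injectivity. For (ii), you correctly isolate the one genuinely nontrivial step---injectivity---and correctly attribute it: the statement that conjugate Coxeter elements are connected by cyclic shifts and commutations is precisely the content of \cite{Eriksson:09} (and is what the paper is citing). One small point worth making explicit in your write-up of (ii): to even use the source-to-sink description of a flip via the conjugation identity, you are implicitly using that a source $w_1$ of $\omega$ can always be taken as the first element of \emph{some} linear extension of $P(G,\omega)$; this is obvious but worth a half-sentence.
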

\noindent
We believe toric partial orders will play a key role in resolving 
more questions about $W$-conjugacy classes.

\end{rem}

\section{Toric arrangements and 
proof of Theorem~\ref{geometric-interpretation-theorem}}
\label{geometry-section}

Recall the statement of the theorem.

\vskip.1in
\noindent
{\bf Theorem~\ref{geometric-interpretation-theorem}.}
{\it 
The map $\bar{\alpha}_G$ induces a bijection
between $\Chambers \AAA_{\tor}(G)$ and $\Acyc(G)/\!\!\equiv$ as follows:
$$
\xymatrix{
\R^V/\Z^V \setmin \AAA_{\tor}(G) 
  \ar@{>>}[d] \ar[r]^{\bar{\alpha}_G} & \Acyc(G)  \ar@{>>}[d] \\
 \Chambers \AAA_{\tor}(G) \ar@{.>}[r]_{\bar{\alpha}_G} & \Acyc(G)/\!\!\equiv
}
$$
In other words, two points $x,x'$ in $\R^V/\Z^V \setmin \AAA_{\tor}(G)$
have $\bar{\alpha}_G(x)\equiv\bar{\alpha}_G(x')$ if and only if
$x,x'$ lie in the same toric chamber $c$ in $\Chambers \AAA_{\tor}(G)$.
}

\vskip.1in
\noindent
Before embarking on the proof, we introduce one further geometric object
intimately connected with 
\begin{itemize}
\item
the graphic arrangement $\AAA(G)=\bigcup_{\{i,j\} \in E} \{x \in \R^V: x_i = x_j\} \subset \R^V$, and
\item
the toric graphic arrangement 
$\AAA_{\tor}(G)=\pi(\AAA(G))$, its image 
under $\R^V \overset{\pi}{\rightarrow} \R^V/\Z^V$.
\end{itemize}

\begin{defn}
  Define the \emph{affine graphic arrangement} in $\R^V$ by
\begin{equation}
\label{affine-arrangement-defn}
\AAA_{\aff}(G) :=  \pi^{-1}(\AAA_{\tor}(G)) = \pi^{-1}(\pi(\AAA(G)))
=\bigcup_{\substack{ \{i,j\} \in E\\ k \in \Z}} \{x \in \R^V: x_i = x_j+k \}.
\end{equation}
  Call the connected components
  $\hat{c}$ of the complement $\R^V \setmin\AAA_{\aff}(G)$ {\it affine
  chambers}, and denote the set of all such chambers $\Chambers \AAA_{\aff}(G)$.
\end{defn}

The reason for introducing $\AAA_{\aff}(G)$ and $\Chambers \AAA_{\aff}(G)$
is the following immediate consequence of the path-lifting
property for $\R^V \overset{\pi}{\rightarrow} \R^V/\Z^V$ as
a (universal) covering map (see e.g. \cite[Chap. 13]{Munkres:75}), along
with the definition \eqref{affine-arrangement-defn} of $\AAA_{\aff}(G)$ 
as the full inverse image under $\pi$ of $\AAA_{\tor}(G)$.

\begin{prop}
\label{path-lifting-prop}
Two points $x,y$ in $\R^V/\Z^V \setmin \AAA_{\tor}(G)$
lie in the same chamber $c$ in $\Chambers\AAA_{\tor}(G)$ if
and only if they have two lifts $\hat{x}, \hat{y}$ lying in the
same affine chamber $\hat{c}$ in $\Chambers \AAA_{\aff}(G)$.
\end{prop}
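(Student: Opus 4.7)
The plan is to prove both directions using the standard covering-space machinery for the quotient map $\pi:\R^V\to\R^V/\Z^V$, together with the defining identity $\AAA_{\aff}(G)=\pi^{-1}(\AAA_{\tor}(G))$. Because $\pi$ is a local homeomorphism (in fact, the universal cover of the torus), the restriction
\[
\pi\big|_{\R^V\setminus\AAA_{\aff}(G)}:\R^V\setminus\AAA_{\aff}(G)\longrightarrow \R^V/\Z^V\setminus\AAA_{\tor}(G)
\]
is a covering map between the complements; this will be the main tool.

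For the ``only if'' direction, suppose $x,y$ lie in a common toric chamber $c$. Since $c$ is open and path-connected, choose a continuous path $\gamma:[0,1]\to c$ with $\gamma(0)=x$ and $\gamma(1)=y$. Fix any lift $\hat{x}\in\pi^{-1}(x)$. By the path-lifting property of the covering map $\pi$, there is a unique continuous lift $\hat{\gamma}:[0,1]\to\R^V$ with $\hat{\gamma}(0)=\hat{x}$. Because $\gamma$ avoids $\AAA_{\tor}(G)$ and $\AAA_{\aff}(G)=\pi^{-1}(\AAA_{\tor}(G))$, the lifted path $\hat{\gamma}$ avoids $\AAA_{\aff}(G)$ entirely; hence its image lies in a single connected component $\hat{c}$ of $\R^V\setminus\AAA_{\aff}(G)$. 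Setting $\hat{y}:=\hat{\gamma}(1)\in\pi^{-1}(y)$, we have produced two lifts $\hat{x},\hat{y}$ lying in the same affine chamber $\hat{c}$.

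For the ``if'' direction, suppose there are lifts $\hat{x},\hat{y}$ in a common affine chamber $\hat{c}\in\Chambers\AAA_{\aff}(G)$. Pick a path $\hat{\gamma}:[0,1]\to\hat{c}$ from $\hat{x}$ to $\hat{y}$. Its image $\gamma:=\pi\circ\hat{\gamma}$ is a continuous path from $x$ to $y$ in $\R^V/\Z^V$. Since $\hat{\gamma}$ avoids $\AAA_{\aff}(G)$, the path $\gamma$ avoids $\pi(\AAA_{\aff}(G))=\AAA_{\tor}(G)$, so $x$ and $y$ are joined by a path in $\R^V/\Z^V\setminus\AAA_{\tor}(G)$, placing them in a common toric chamber.

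There is no real obstacle here: the statement is essentially a direct consequence of path-lifting plus the defining equation $\AAA_{\aff}(G)=\pi^{-1}(\AAA_{\tor}(G))$. The only point requiring a moment's care is to make sure that the lifted path does not accidentally cross into $\AAA_{\aff}(G)$, which is immediate because the preimage of a path avoiding $\AAA_{\tor}(G)$ lies in the complement $\R^V\setminus\AAA_{\aff}(G)$; connectedness of the lifted path then keeps it in a single affine chamber.
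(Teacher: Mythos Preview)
Your proof is correct and follows exactly the approach the paper indicates: the paper does not give a written-out proof but simply states that the proposition is an immediate consequence of the path-lifting property for the covering map $\pi:\R^V\to\R^V/\Z^V$ together with the definition $\AAA_{\aff}(G)=\pi^{-1}(\AAA_{\tor}(G))$. Your argument is precisely the detailed unpacking of that remark.
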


\noindent
The point will be that, since affine chambers  $\hat{c}$ are (open) convex 
polyhedral regions in $\R^V$, it is sometimes easier to argue about 
lifted points $\hat{x}$ rather than $x$ itself.

Our proof of 
Theorem~\ref{geometric-interpretation-theorem}
proceeds by showing the map 
$
\R^V/\Z^V \setmin \AAA_{\tor}(G) 
  \overset{\bar{\alpha}_G}{\longrightarrow} \Acyc(G)
$
descends to 
\begin{itemize}
\item a well-defined map 
$
\Chambers \AAA_{\tor}(G) 
  \overset{\bar{\alpha}_G}{\longrightarrow} \Acyc(G)/\!\!\equiv,
$
\item which is surjective, 
\item and injective.
\end{itemize}

\subsection{Well-definition}

We must show that when $x, y$ lie in the same toric chamber
$c$ in $\Chambers\AAA_{\tor}(G)$, then
$\bar{\alpha}_G(x) \equiv \bar{\alpha}_G(y)$.
As in Proposition~\ref{path-lifting-prop}, pick lifts 
$\hat{x},\hat{y}$ in $\R^V$ and a path $\hat{\gamma}$ between
them in some affine chamber $\hat{c}$.  Because these chambers are
open, one can assume without loss of generality that $\hat{\gamma}$
takes steps in coordinate directions only, and therefore that
$\hat{x}, \hat{y}$ differ in only a single coordinate:  say
$\hat{x}_i \neq \hat{y}_i$, but $\hat{x}_j=\hat{y}_j$ for all $j \neq i$.
Furthermore, as $\bar{\alpha}_G(x)$ changes only when a coordinate
of $\hat{x}$ passes through an integer, without loss of generality, one may assume
$$
\begin{aligned}
\modone{\hat{x}_i}&=1-\epsilon,\\
\modone{\hat{y}_i}&=\epsilon
\end{aligned}
$$ 
for some arbitrarily small $\epsilon > 0$.  
Since the points on $\hat{\gamma}$ all avoid $\AAA_{\aff}(G)$,
and the $i^{th}$ coordinate will pass through $0$ at some point
on the path $\hat{\gamma}$, each of the
coordinates $\hat{x}_j(=\hat{y}_j)$ for indices $j$ with 
$\{i,j\}$ in $E$ must have $0 < \modone{\hat{x}_j} < 1$.  
Hence one can choose $\epsilon$
small enough that all $j$ for which $\{i,j\}$ in $E$ satisfy
$$
\left( \modone{\hat{y}_i} = \right) \epsilon < \modone{\hat{x}_j} 
         < 1 - \epsilon \left( = \modone{\hat{x}_i} \right).
$$
One finds that $\bar{\alpha}_G(\hat{x})$ and $\bar{\alpha}_G(\hat{y})$ 
differ by changing $i$ from sink to a source, so
$\bar{\alpha}_G(\hat{x}) \equiv \bar{\alpha}_G(\hat{y})$,
as desired.

\subsection{Surjectivity}

It suffices to check that the map 
$
\R^V/\Z^V \setmin \AAA_{\tor}(G) 
  \overset{\bar{\alpha}_G}{\longrightarrow} \Acyc(G)
$
is surjective.  Given an acyclic orientation $\omega$
of $G$, pick any linear extension $w_1 < \cdots < w_n$ of its associated
partial order $\alpha_G^{-1}(\omega)$ on $V$.  Then choose real numbers
$0 < x_{w_1} < \cdots < x_{w_n} < 1$, so that
$$
x=(x_1,\ldots,x_n)=(\modone{x_1},\ldots,\modone{x_n})
$$
and hence $\bar{\alpha}_G(x)=\omega$.

\subsection{Injectivity}

The key to injectivity is the following lemma.
  \begin{lem}\label{lem:key}
   Suppose $x$ lies in a toric chamber $c$ in $\Chambers \AAA_{\tor}(G)$,
and $\bar{\alpha}_G(x)=\omega$.  
Then for any $\omega' \equiv \omega$, there exists
some $x'$ in the same toric chamber $c$ having $\bar{\alpha}_G(x')=\omega'$.
  \end{lem}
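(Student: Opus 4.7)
The plan is to argue by induction on the number of flips needed to pass from $\omega$ to $\omega'$. Since $\equiv$ is the equivalence relation generated by the flip operation, any $\omega' \equiv \omega$ is connected to $\omega$ by a finite sequence $\omega = \omega_0, \omega_1, \ldots, \omega_k = \omega'$ in which consecutive terms differ by a single source-to-sink flip or its inverse. It therefore suffices to handle the single-flip step: given $x \in c$ with $\bar{\alpha}_G(x) = \omega$, produce $x' \in c$ with $\bar{\alpha}_G(x') = \omega'$ under the assumption that $\omega$ and $\omega'$ differ by turning one source $i$ of $\omega$ into a sink of $\omega'$ (the sink-to-source case is entirely symmetric, obtained by reversing the direction of motion below).

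For this step I would construct $x'$ by modifying only the $i$-th coordinate of $x$. Let $m := \min\{\modone{x_j} : \{i,j\} \in E\}$ and $M := \max\{\modone{x_j} : \{i,j\} \in E\}$; the hypothesis that $i$ is a source of $\omega$ forces $\modone{x_i} < m \leq M$, and $M < 1$ holds by definition of the fractional-part representative. Define $x' \in \R^V/\Z^V$ by setting $x'_j := x_j$ for $j \neq i$ and $\modone{x'_i} := (M+1)/2 \in (M, 1)$. Then $\modone{x'_i} > \modone{x'_j}$ for every neighbor $j$ of $i$, so $i$ is a sink of $\bar{\alpha}_G(x')$; since all other coordinates coincide with those of $x$, the orientations of edges not incident to $i$ are unchanged, giving $\bar{\alpha}_G(x') = \omega'$.

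The main content, and the step I expect to be the real obstacle, is checking that this $x'$ lies in the same toric chamber $c$ as $x$. I plan to connect them by the path $\gamma : [0, 1] \to \R^V/\Z^V$ that fixes all coordinates other than the $i$-th and sweeps the $i$-th coordinate along the circle $\R/\Z$ from $\modone{x_i}$ to $(M+1)/2$, traveling through the point $0 = 1$. Along this path the $i$-th coordinate, viewed in $[0, 1)$, takes only values in $[0, \modone{x_i}] \cup [(M+1)/2, 1)$, which is disjoint from $[m, M]$ because $\modone{x_i} < m$ and $M < (M+1)/2$; hence $\gamma(t)_i \neq \modone{x_j}$ in $\R/\Z$ for every neighbor $j$ and every $t$. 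No edge hyperplane $\{y_k = y_\ell \bmod 1\}$ with $k, \ell \neq i$ is crossed either, since those coordinates are constant along $\gamma$. Therefore $\gamma$ is a path in $\R^V/\Z^V \setmin \AAA_{\tor}(G)$ from $x$ to $x'$, witnessing $x, x' \in c$ and completing the single-flip step (and hence the lemma).
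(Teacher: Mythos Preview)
Your proof is correct and follows essentially the same strategy as the paper: reduce to a single source-to-sink flip at a vertex $i$, then move only the $i$-th coordinate ``through zero'' so that it passes from below all neighboring fractional parts to above them, checking that this motion avoids every hyperplane of the arrangement. The only cosmetic difference is that the paper lifts to $\R^V$ and uses a straight-line path in the complement of $\AAA_{\aff}(G)$ (decreasing $\hat{x}_i$ to a small negative value $-\epsilon$) together with Proposition~\ref{path-lifting-prop}, whereas you argue directly in the torus with an explicit target value $(M+1)/2$.
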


  \begin{proof}
 It suffices to check this when $\omega'$ is obtained from $\omega$ by
changing a source vertex $i$ in $\omega$ to a sink in $\omega'$.
Since $\bar{\alpha}_G(x)=\omega$, one must have for each $j$ with
$\{i,j\}$ in $E$ that
$$
(0 \leq ) \modone{x_i} < \modone{x_j} (< 1).
$$
Lift $x$ to $\hat{x}=(\modone{x_1},\ldots,\modone{x_n})$, and
choose $\epsilon$ small enough so that each $j$ with $\{i,j\}$ in $E$ has
$
\modone{x_j} < 1-\epsilon.
$
Define $\hat{y}$ to have all the same coordinates as $\hat{x}$ except for
$\hat{y}_i=-\epsilon$, so that $\modone{\hat{y}_i}=1-\epsilon$, and
hence $y:=\pi(\hat{y})$ has $\bar{\alpha}_G(y)=\omega'$ by construction.  
Note that the straight-line path $\hat{\gamma}$
from $\hat{x}$ to $\hat{y}$ changes only the $i^{th}$ coordinate,
decreasing it from $\hat{x}_i$ to $\hat{y}_i=-\epsilon$,
and hence never crosses any of the affine
hyperplanes in $\AAA_{\aff}(G)$.  Therefore $\hat{x},\hat{y}$ lie
in the same affine chamber, and $x,y$ lie in the same toric
chamber $c$.
  \end{proof}

Now suppose that points $x,x'$ in two toric chambers $c,c'$ 
have $\bar{\alpha}_G(x) \equiv \bar{\alpha}_G(x')$, and we must show that
$c=c'$.  By Lemma~\ref{lem:key}, without loss of generality one
has $\bar{\alpha}_G(x) = \omega = \bar{\alpha}_G(x')$.  Thus one can
lift $x,x'$ to $\hat{x},\hat{x}'$ having $\hat{x}_i,\hat{x}'_i$ in $[0,1)$
for all $i$, and hence
$\alpha_G(\hat{x})=\omega=\alpha_G(\hat{x}')$.
For each edge $\{i,j\}$ in $E$, say directed $i\to j$ in $\omega$,
one has both 
$$
\begin{aligned}
0 \leq \hat{x}_i &< \hat{x}_j <1,\\ 
0 \leq \hat{x}'_i &< \hat{x}'_j <1.
\end{aligned}
$$
Thus every point $\hat{y}$ on the straight-line path 
$\hat{\gamma}$ between $\hat{x}$ and $\hat{x}'$ also satisfies
$0 \leq \hat{y}_i < \hat{y}_j < 1$, avoiding all affine 
hyperplanes in $\AAA_{\aff}(G)$.  Thus $\hat{x},\hat{x}'$ lie 
in the same affine chamber $\hat{c}$, so that $x,x'$ lie
in the same toric chamber, as desired.  This completes the proof
of injectivity, and hence the proof of Theorem~\ref{geometric-interpretation-theorem}.$\qed$

\vskip.2in
One corollary to Theorem~\ref{geometric-interpretation-theorem} is a (slightly)
more concrete description of a toric chamber $c$.

\begin{cor}
\label{toric-chamber-as-union-cor}
For a graph $G=(V,E)$ and toric chamber $c$ in $\Chambers \AAA_{\tor}(G)$
with $\bar{\alpha}_G(c)=[\omega]$, one has
$$
c= \bigcup_{ \omega' \in [\omega] }
       \bar{\alpha}_G^{-1}(\omega') 
= \bigcup_{ \omega' \in [\omega]  }
      \{ x \in \R^V/\Z^V: \modone{x_i} < \modone{x_j} \text{ if }\omega'\text{ directs }i \rightarrow j\}. 
$$
\end{cor}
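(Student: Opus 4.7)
The plan is simply to chase definitions, using Theorem~\ref{geometric-interpretation-theorem} for the nontrivial half.

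First I would establish the first equality $c = \bigcup_{\omega' \in [\omega]} \bar{\alpha}_G^{-1}(\omega')$. By Theorem~\ref{geometric-interpretation-theorem}, the diagram \eqref{alpha-floor-diagram} commutes and induces a bijection between toric chambers and $\equiv$-classes. In particular, a point $x \in \R^V/\Z^V \setminus \AAA_{\tor}(G)$ lies in the toric chamber $c$ if and only if $\bar{\alpha}_G(x)$ lies in the $\equiv$-class $[\omega]=\bar{\alpha}_G(c)$, i.e., $\bar{\alpha}_G(x) = \omega'$ for some $\omega' \in [\omega]$. This is exactly the assertion $x \in \bigcup_{\omega' \in [\omega]} \bar{\alpha}_G^{-1}(\omega')$, giving both inclusions at once.

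Next I would unpack the second equality, which is purely a restatement of the definition of $\bar{\alpha}_G$. Recall that for $x \in \R^V/\Z^V \setminus \AAA_{\tor}(G)$ and an edge $\{i,j\} \in E$, the orientation $\bar{\alpha}_G(x)$ is defined to direct $i \to j$ precisely when $\modone{x_i} < \modone{x_j}$. Hence for any fixed $\omega' \in \Acyc(G)$,
$$
\bar{\alpha}_G^{-1}(\omega') = \{ x \in \R^V/\Z^V : \modone{x_i} < \modone{x_j} \text{ whenever } \omega' \text{ directs } i \to j \},
$$
(the points on $\AAA_{\tor}(G)$ are automatically excluded, since on any edge of $G$ the condition forces $\modone{x_i} \neq \modone{x_j}$). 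Substituting this expression into the union from the previous step yields the second displayed equality.

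There is no real obstacle; the only point requiring any care is noting that the set-theoretic union on the right-hand side is automatically contained in the complement of $\AAA_{\tor}(G)$, so nothing needs to be said about boundary points. The entire content of the corollary is therefore the combination of Theorem~\ref{geometric-interpretation-theorem} with the pointwise definition of $\bar{\alpha}_G$.
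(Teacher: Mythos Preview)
Your argument is correct and matches the paper's treatment: the corollary is stated there without proof as an immediate consequence of Theorem~\ref{geometric-interpretation-theorem}, and you have simply spelled out the definition-chase that the paper leaves implicit. The small remark that the strict inequalities automatically exclude points of $\AAA_{\tor}(G)$ is a nice touch that makes the second equality airtight.
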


\section{Toric extensions}
\label{extensions-section}

Recall that for two (ordinary) posets $P, P'$ on a set $V$, one
says that {\it $P'$ is an extension of $P$} when $i <_P j$ implies
$i <_{P'} j$.  It is easily seen how to reformulate this geometrically:
$P'$ is an extension of $P$ if and only one has
an inclusion of their open polyhedral cones 
$c(P') \subseteq c(P)$, as defined in \eqref{cone-of-a-poset-defn}.
This motivates the following definition.

\begin{defn}
Given two toric posets $P, P'$ say that {\it $P'$ is a toric
extension of $P$} if one has an inclusion of their open chambers
$c(P') \subseteq c(P)$ within $\R^V/\Z^V$.
\end{defn}
An obvious situation where this can occur is when
one has $G=(V,E)$ and $G'=(V,E')$ two graphs on the same vertex set $V$, 
with $G$ an {\it edge-subgraph} of $G'$ in the sense that $E \subseteq E'$,

\begin{prop}
\label{edge-subgraph-chamber-refinement}
Fix $G=(V,E)$ a simple graph.

\begin{enumerate}
\item[(i)] Toric chambers in $\Chambers \AAA_{\tor}(G)$
are determined by their topological closures:
for any pair of chambers $c_1,c_2$ 
in $\Chambers \AAA_{\tor}(G)$, if 
$\bar{c}_1=\bar{c}_2$ then $c_1=c_2$.
\item[(ii)] 
If $G$ is an edge-subgraph of $G'$, then
$
\bar{c} = \bigcup_{c'} \bar{c}',
$ 
where the union runs over all toric chambers 
$c'$ in $\Chambers \AAA_{\tor}(G')$ for which
$P(c')$ is a toric extension of $P(c)$.
\end{enumerate}
\end{prop}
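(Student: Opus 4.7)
\textbf{Proof plan for Proposition~\ref{edge-subgraph-chamber-refinement}.}
For part (i), the plan is to invoke the following elementary topological fact: two disjoint nonempty open subsets of any topological space cannot have equal closures. Distinct toric chambers $c_1,c_2$ in $\Chambers \AAA_{\tor}(G)$ are, by definition, distinct connected components of the open set $\R^V/\Z^V \setmin \AAA_{\tor}(G)$, hence are disjoint nonempty open sets. If one had $\bar{c}_1 = \bar{c}_2$, pick any $y \in c_1$ together with an open neighborhood $U$ of $y$ contained in $c_1$; then $y \in \bar{c}_1 = \bar{c}_2$ forces $U \cap c_2 \neq \emptyset$, producing a point in $c_1 \cap c_2$, a contradiction. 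The argument uses nothing about the toric structure, which is consistent with the observation that the subtlety in the $n=2$ example arises only when one compares chambers coming from \emph{different} graphs.

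For part (ii), since $E \subseteq E'$ yields $\AAA_{\tor}(G) \subseteq \AAA_{\tor}(G')$, every toric chamber $c' \in \Chambers \AAA_{\tor}(G')$ lies in a unique toric chamber of $\Chambers \AAA_{\tor}(G)$; and by the definition of toric extension, $c' \subseteq c$ is precisely the condition that $P(c')$ be a toric extension of $P(c)$. Thus the assertion reduces to proving $\bar c = \bigcup_{c' \subseteq c} \bar{c'}$, where the union runs over toric chambers $c'$ of $\AAA_{\tor}(G')$ contained in $c$. The inclusion $\supseteq$ is immediate. For the reverse inclusion, given $x \in \bar c$, I would choose a sequence $x_n \in c$ with $x_n \to x$. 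Since $\AAA_{\tor}(G')$ is a finite union of codimension-one submanifolds of the compact torus, it is nowhere dense, so one can perturb slightly within $c$ (which is open) to obtain $x_n' \in c \setmin \AAA_{\tor}(G')$ still converging to $x$. Each $x_n'$ then lies in some toric chamber $c_n' \subseteq c$ of $\AAA_{\tor}(G')$; since there are only finitely many such $c'$, some single $c'$ contains infinitely many of the $x_n'$, yielding $x \in \bar{c'}$ as desired.

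I do not anticipate a serious obstacle: the argument is largely topological bookkeeping. The conceptually most delicate step is the perturbation in (ii), which requires recognizing that the finitely many codimension-one sheets of $\AAA_{\tor}(G')$ form a nowhere dense subset of the compact torus, so their complement is dense in every open set. The remaining work consists of the tautological identification of ``$c' \subseteq c$'' with ``$P(c')$ is a toric extension of $P(c)$'' and the use of finiteness of $\Chambers \AAA_{\tor}(G')$ to extract a subsequence in a single $c'$.
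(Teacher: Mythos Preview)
Your proof is correct. For part (i) you and the paper argue the same point-set fact in slightly different phrasings, so there is nothing to add there.

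For part (ii) you take a genuinely different route. The paper lifts everything through the covering map $\pi\colon \R^V \to \R^V/\Z^V$ to the affine arrangement $\AAA_{\aff}$, establishes $\bar c = \pi\bigl(\overline{\pi^{-1}(c)}\bigr)$ via the local-homeomorphism property, and then reduces to the evident polyhedral fact that refining a hyperplane arrangement decomposes each closed chamber as the union of the closures of the finer chambers it contains. Your argument stays entirely on the torus: you use that $\AAA_{\tor}(G')$ is nowhere dense to perturb an approximating sequence off it, and then pigeonhole on the finiteness of $\Chambers \AAA_{\tor}(G')$. Your approach is more elementary and in fact proves a more general statement (it needs only that the refining set is closed, nowhere dense, and has finitely many complementary components), while the paper's approach ties the result explicitly to the affine-chamber picture that is used repeatedly elsewhere in the article. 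Either is perfectly adequate here.
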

\begin{proof}
For (i), first note that any toric chamber $c$ in $\Chambers \AAA_{\tor}(G)$ 
has boundary $\bar{c} \setmin c$ contained in $\AAA_{\tor}(G)$.
Now assume two toric chambers $c_1, c_2$ in $\Chambers \AAA_{\tor}(G)$ have
$\bar{c}_1 = \bar{c}_2$, and we wish to show $c_1=c_2$.
Any point $x$ of $c_1$ has 
$x \in c_1 \subseteq \bar{c}_1=\bar{c}_2$.
However, $x$ cannot lie in $\AAA_{\tor}(G)$ since
$c_1$ is disjoint from $\AAA_{\tor}(G)$, so $x$ does not lie
in $\bar{c}_2 \setmin c_2 \subset \AAA_{\tor}(G)$ by our first
observation.  Hence $x$ lies in $c_2$.
But then $c_1, c_2$ are connected components of
$\R^V / \Z^V \setmin \Chambers \AAA_{\tor}(G)$, sharing the point $x$,
so $c_1=c_2$.

For (ii), we first argue that
\begin{equation}
\label{closure-described-via-lift}
\bar{c}=\pi\left( \overline{\pi^{-1}(c)} \right)
\end{equation}
using the fact that the covering map $\R^V \overset{\pi}{\rightarrow} \R^V/\Z^V$
is locally a homeomorphism.  For any point $x$ in $\R^V/\Z^V$ there is
an open neighborhood $U$ which lifts to an open neighborhood $\hat{U}$,
mapping homeomorphically under $\pi$ to $U$.  Hence $x$ is the
limit of a sequence $\{x_i\}_{i=1}^\infty$ of points in $c$ 
if and only if its lift $\hat{x}=\pi|_{\hat{U}}^{-1}(x)$ is a limit of 
the sequence of points $\{\pi|_{\hat{U}}^{-1}(x_i)\}_{i=1}^\infty$ in  
$\pi^{-1}(c)$.  This shows \eqref{closure-described-via-lift}.

Since a toric chamber $c$ has $\pi^{-1}(c)$ given by a union of
affine chambers $\hat{c}$ in $\Chambers \AAA_{\aff}(G)$, in light of
\eqref{closure-described-via-lift},
it suffices to show that any affine chamber $\hat{c}$ in $\Chambers \AAA_{\aff}(G)$ has closure $\overline{\hat{c}}$ 
given by the union of the closures
$\overline{\hat{c}'}$ taken over all affine chambers $\hat{c}'$ in $\Chambers \AAA_{\aff}(G')$ that satisfy $\hat{c}' \subseteq \hat{c}$.  
However, this
is clear since $\hat{c}$ is a polyhedron bounded by hyperplanes taken from $\AAA_{\aff}(G)$, while $\AAA_{\aff}(G')$ simply refines this decomposition with more hyperplanes.
\end{proof}

\section{Toric directed paths}
\label{directed-paths-section}

A particular special case of Proposition~\ref{edge-subgraph-chamber-refinement} is worth noting:
every graph $G=(V,E)$ is an edge-subgraph of the {\it complete graph $K_V$}.
As noted in the Introduction, acyclic orientations $\omega$ of $K_V$ correspond to total orders $w_1 < \cdots < w_n$, indexed by permutations $w=(w_1,\ldots,w_n)$ of $V=[n]:=\{1,2,\ldots,n\}$.  It is easy to characterize the equivalence relation $\equiv$ on these total orders, and hence the toric chambers $\Chambers \AAA_{\tor}(K_V)$, in terms of cyclic shifts of these linear orders.  However,
it is worthwhile to define this concept is a bit more generally-- it turns out to be crucial in Section~\ref{chains-section}.

\begin{defn}
\label{toric-directed-path-defn}
Given a simple graph $G=(V,E)$ and an acyclic orientation $\omega$ of
$G$, say that a sequence $(i_1,i_2,\ldots,i_m)$ of elements of $V$ forms
a {\it toric directed path in $\omega$} if $(G,\omega)$ contains all of these
edges:
\begin{equation}
\label{toric-directed-path-figure}
\xymatrix{
i_m & \\
                     &i_{m-1} \ar[ul] \\
                     &\vdots \ar[u]\\
                     &i_2 \ar[u] \\
i_1 \ar[uuuu] \ar[ur]&
}
\end{equation}
\end{defn}
\noindent
In particular, for small values of $m$, a toric 
directed path in $\omega$
\begin{enumerate}
\item[$\bullet$]
of size $m=2$ is a directed edge $(i_1,i_2)$, 
\item[$\bullet$]
of size $m=1$ is a degenerate path $(i_1)$ for any $i_1$ in $V$, and
\item[$\bullet$]
of size $m=0$ is the empty subset $\varnothing \subset V$.
\end{enumerate}

\begin{prop}
\label{flip-preserves-toric-directed-path}
An acyclic orientation $\omega$ of $G$
contains a toric directed path $(i_1,i_2,\ldots,i_m)$ if and only if every 
acyclic orientation $\omega'$ in its $\equiv$-equivalence class contains a (unique) toric directed path 
$$
(i_\ell,i_{\ell+1},\ldots,i_m,i_1,i_2,\ldots,i_{\ell-1})
$$
which is one of its cyclic shifts, that is, it lies in the cyclic
equivalence class $[(i_1,\ldots,i_m)]$.
\end{prop}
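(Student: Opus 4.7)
The plan is to reduce the statement to the case of a single flip and then carry out a short case analysis based on whether the flipped vertex lies on the putative toric directed path. Since $\equiv$ is the transitive closure of the source-to-sink flip relation, I would first prove the forward direction in the following stronger local form: if $\omega \to \omega'$ is a single flip and $\omega$ contains $(i_1,\ldots,i_m)$ as a toric directed path, then $\omega'$ contains a unique cyclic shift of $(i_1,\ldots,i_m)$ as a toric directed path.

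For that single-flip analysis, suppose $v$ is the source of $\omega$ being flipped to a sink. The key observation is that if $v = i_k$ for some $k \in \{1,\ldots,m\}$, then the path edge $i_{k-1} \to i_k$ (for $k > 1$), together with the shortcut $i_1 \to i_m$ when $k = m$, is incoming at $v$, contradicting $v$ being a source of $\omega$. So the only vertex on the path that can be flipped is $v = i_1$. In that case, flipping $i_1$ reverses precisely the edges incident to $i_1$; in particular the two path edges $i_1 \to i_2$ and $i_1 \to i_m$ become $i_2 \to i_1$ and $i_m \to i_1$, while the spine $i_2 \to \cdots \to i_m$ is untouched. A direct inspection of the required edges in diagram~\eqref{toric-directed-path-figure} then shows that $(i_2, i_3, \ldots, i_m, i_1)$ is a toric directed path in $\omega'$, namely the cyclic left-shift. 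If $v$ does not lie on the path, no path edge is affected and $(i_1,\ldots,i_m)$ itself survives verbatim. A symmetric analysis of a sink-to-source flip rules out all $i_k$ except $v = i_m$ and produces the cyclic right-shift $(i_m, i_1, \ldots, i_{m-1})$.

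Uniqueness of the shift appearing in each $\omega'$ then follows from the asymmetry of diagram~\eqref{toric-directed-path-figure}: in any toric directed path $(j_1,\ldots,j_m)$, the starting vertex $j_1$ is the unique vertex of out-degree $2$ in the induced subgraph on the $m$ path-edges (equivalently, $j_m$ is the unique vertex of in-degree $2$). Hence at most one cyclic shift of $(i_1,\ldots,i_m)$ can realize a toric directed path in any fixed orientation, which pins down the $\ell$ appearing in the statement. The backward direction of the iff is then immediate by symmetry: taking $\omega' = \omega$ in the hypothesis yields a cyclic shift of $(i_1,\ldots,i_m)$ realized in $\omega$, and applying the forward direction to that representative recovers, by uniqueness, the assumed $(i_1,\ldots,i_m)$.

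The main obstacle I anticipate is conceptual rather than technical, namely being precise about the distinction between "contains $(i_1,\ldots,i_m)$" and "contains some member of $[(i_1,\ldots,i_m)]$"; the uniqueness observation above reconciles the two readings by producing, for each $\omega' \equiv \omega$, a canonical $\ell(\omega')$ making the statement a clean invariance claim for the cyclic class $[(i_1,\ldots,i_m)]$. The per-flip case analysis itself is short once the shortcut edge $i_1 \to i_m$ in diagram~\eqref{toric-directed-path-figure} is recognized as the feature forcing $i_1$ (respectively $i_m$) to be the only flippable vertex on the path.
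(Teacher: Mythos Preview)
Your proposal is correct and takes essentially the same approach as the paper. The paper's proof is a two-sentence sketch of precisely your argument: it observes that in the subgraph~\eqref{toric-directed-path-figure} the only source is $i_1$ and the only sink is $i_m$ (so a flip at a vertex on the path must be at $i_1$ or $i_m$), and then checks that flipping $i_1$ (resp.\ $i_m$) produces the cyclic shift $(i_2,\ldots,i_m,i_1)$ (resp.\ $(i_m,i_1,\ldots,i_{m-1})$); your write-up simply unpacks these steps and adds the off-path case and the uniqueness remark explicitly.
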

\begin{proof}
A toric directed path $(i_1,i_2,\ldots,i_m)$ has only one source, namely $i_1$,
and only one sink, namely $i_m$.  The assertion follows by 
checking that the effect of a source-to-sink flip at $i_1$ (resp. $i_m$)
is a cyclic shift to the toric directed path $(i_2,\ldots,i_m,i_1)$ (resp.
$(i_m,i_1,i_2,\ldots,i_{m-1})$).
\end{proof}

\begin{rem}
\label{Coleman-remark}
We point out a reformulation of the sink-to-source equivalence
relation $\equiv$ on $\Acyc(G)$, due to Pretzel \cite{Pretzel:86},
leading to a reformulation of toric directed paths, useful
in Section~\ref{Antichain-section} on toric antichains.

Given a simple graph $G=(V,E)$, say that a cyclic equivalence
class $I=[(i_1,\ldots,i_m)]$ of ordered vertices is a {\it directed
cycle} of $G$ if $m\geq 3$ and $G$ contains all of the (undirected) 
edges $\{i_j,i_{j+1}\}_{j=1,2\ldots,m}$,
with subscripts taken modulo $m$.  Given such a directed cycle $I$
define \emph{Coleman's $\nu$-function}~\cite{Coleman:89}
$$
\Acyc(G) \overset{\nu_I}{\longrightarrow} \Z
$$
where $\nu_I(\omega)$ for an acyclic orientation $\omega$ of $G$ is
defined to be the number of edges $\{i_j,i_{j+1}\}$ in $I$ 
which $\omega$ orients $i_j  \rightarrow i_{j+1}$
minus the number of edges $\{i_j,i_{j+1}\}$
which $\omega$ orients $i_{j+1}  \rightarrow i_j$.
It is easy to see that $\nu_I$ is preserved
by flips, and thus extends in a well-defined manner 
to $\equiv$-classes $[\omega]$. In fact, Pretzel~\cite{Pretzel:86} 
showed that this is a complete $\equiv$-invariant:

\begin{prop}\label{prop-pretzel}
  Fixing the graph $G=(V,E)$, two acyclic orientations $\omega, \omega'$ in $\Acyc(G)$
  have $\omega \equiv \omega'$ if and only if 
  $\nu_I(\omega)=\nu_I(\omega')$ for every directed cycle $I$ of $G$.
\end{prop}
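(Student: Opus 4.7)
The plan is to prove the two directions separately.

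For the forward direction it suffices, by transitivity of $\equiv$, to verify that $\nu_I$ is preserved under a single source-to-sink flip at a vertex $v$. If $v$ is not on $I=[(i_1,\ldots,i_m)]$ then no cycle-edge moves. If $v=i_j$ occurs on $I$, the two cycle-edges at this occurrence, $\{i_{j-1},v\}$ and $\{v,i_{j+1}\}$, both leave the source $v$ in $\omega$ and contribute $(-1)+(+1)=0$ to $\nu_I(\omega)$; after the flip both enter the sink $v$ and contribute $(+1)+(-1)=0$ to $\nu_I(\omega')$. Summing over all occurrences of $v$ on $I$ gives $\nu_I(\omega)=\nu_I(\omega')$.

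For the reverse direction the strategy is to encode the cycle-integral hypothesis as a $\Z$-valued potential on $V$ and then descend by flips. Define an antisymmetric $\{-1,0,1\}$-valued $1$-cochain $f$ on oriented edges of $G$ by $f(i\to j)=+1$ when $\omega$ directs $i\to j$ but $\omega'$ directs $j\to i$, and $f(i\to j)=0$ on agreement edges. Then $\nu_I(\omega)-\nu_I(\omega')=2\sum_{j}f(i_j\to i_{j+1})$, so the hypothesis says $f$ integrates to zero around every cycle of $G$. The standard spanning-tree construction---fix a base vertex $v_0$ and set $g(v)$ equal to the integral of $f$ along the tree-path from $v_0$ to $v$---produces a function $g\colon V\to\Z$ with
\[
g(j)-g(i)=f(i\to j)\in\{-1,0,1\} \quad \text{for every edge }\{i,j\}\in E,
\]
the cycle hypothesis being precisely what ensures consistency on non-tree edges. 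Equivalently, $g$ is constant along agreement edges and increases by $1$ in the $\omega$-direction along disagreement edges.

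Normalize $g$ so $\min_V g=0$ and induct on $\Phi:=\sum_v g(v)$. If $\Phi=0$ then $\omega=\omega'$; otherwise let $M=\max_V g>0$ and $S=g^{-1}(M)$. Every edge from $S$ to $V\setminus S$ is a disagreement edge oriented by $\omega$ in the $g$-increasing direction, hence into $S$; every edge inside $S$ is an agreement edge. Hence $\omega|_S$ is an acyclic orientation of the induced subgraph on $S$ and so has a sink $v$, which is also a sink of $\omega$ in $G$ by the previous sentence. Reverse-flipping $v$---the inverse of a source-to-sink flip---produces $\omega_1\equiv\omega$. Tracking how each edge incident to $v$ switches between agreement and disagreement under this flip shows that the new potential is $g_1=g-\mathbf{1}_v$, so $g_1\ge 0$ and $\Phi$ strictly decreases. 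Iterating terminates with an orientation equal to $\omega'$, establishing $\omega\equiv\omega'$.

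The main obstacle is identifying the right invariant: the cycle-integral hypothesis must be converted into a $\Z$-valued potential $g$ whose level-set structure both organizes the source-to-sink flips and supplies the monotone quantity $\Phi$. Once $g$ is in hand, the existence of a sink of $\omega$ inside $S$ and the single-flip update $g_1=g-\mathbf{1}_v$ are direct verifications.
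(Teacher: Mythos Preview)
The paper does not actually prove this proposition: it is stated inside Remark~\ref{Coleman-remark}, where the forward direction is asserted as ``easy to see'' and the full statement is attributed to Pretzel~\cite{Pretzel:86} without argument. So there is no in-paper proof to compare against; your write-up supplies what the paper outsources.

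Your argument is correct. The forward direction is exactly the one-line check the paper alludes to. For the converse, your approach---encoding the difference of the two orientations as a $\{-1,0,1\}$-valued $1$-cochain, using the cycle hypothesis to integrate it to a potential $g$, and then flipping a sink in the top level set $g^{-1}(M)$ to drive $\Phi=\sum_v g(v)$ down---is essentially Pretzel's original strategy of ``pushing down'' guided by a height function, and all the verifications (that every edge leaving $S$ is a disagreement edge oriented into $S$, that the chosen $v$ is a sink in all of $G$, and that the updated potential is $g-\mathbf 1_v$) go through as you describe.

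One small point worth tightening: the spanning-tree construction of $g$ as written assumes $G$ is connected (a single base vertex $v_0$). For disconnected $G$, define $g$ componentwise and normalize so that $\min g=0$ on each component; then $\min_V g=0$ is preserved after each flip (the vertex $v$ you flip has $g(v)=M>0$, so the old minimizers persist), and the induction on $\Phi$ terminates exactly as before. With that caveat, the proof is complete.
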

Toric directed paths then have an obvious 
characterization in terms of their $\nu_I$ function.

\begin{cor}
  \label{cor:nu}
  Given a directed cycle in $I=[(i_1,\ldots,i_m)]$ in $G$, an acyclic orientation $\omega$ in $\Acyc(G)$ contains a toric directed path lying in the
cyclic equivalence class $I$ if and only if $\nu_I(\omega)=m-1$.
\end{cor}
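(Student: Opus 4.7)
The approach is to treat the two implications separately, exploiting the fact that both sides of the biconditional are $\equiv$-invariants: the property ``$\omega$ contains a toric directed path lying in $I$'' is preserved (up to cyclic shift of the path) by source-to-sink flips via Proposition~\ref{flip-preserves-toric-directed-path}, while $\nu_I$ is itself a flip invariant by Proposition~\ref{prop-pretzel}. Consequently, it suffices to verify the equivalence on any convenient representative of the class $[\omega]$.

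For the forward direction, suppose $\omega$ contains a toric directed path in the cyclic class $I = [(i_1,\ldots,i_m)]$. Cyclically shifting the representative of $I$, I may assume the path is literally $(i_1,\ldots,i_m)$, so that $\omega$ contains the chain $i_1 \to i_2 \to \cdots \to i_m$ together with the long edge $i_1 \to i_m$. I then run through the $m$ edges $\{i_j, i_{j+1}\}$ of $I$ (with subscripts taken mod $m$) and tally their orientations: the $m-1$ chain edges are each oriented in the forward cyclic direction $i_j \to i_{j+1}$, while the closing edge $\{i_m, i_1\}$ is oriented $i_1 \to i_m$, that is, opposite to the forward cyclic direction at that edge. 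Plugging these orientations into the definition of $\nu_I$ yields the stated value.

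For the reverse direction, the value of $\nu_I(\omega)$ pins down exactly how many edges of $I$ are forward and how many are backward in $\omega$; the stated value forces all but one of the $m$ cycle edges to be oriented forward, with a single exceptional backward edge. I then cyclically shift the representative of $I$ so this exceptional edge lands in the position $\{i_m, i_1\}$. The remaining $m-1$ cycle edges are oriented $i_j \to i_{j+1}$, assembling into the chain $i_1 \to i_2 \to \cdots \to i_m$ in $\omega$, and the exceptional edge is oriented $i_1 \to i_m$, supplying the long edge required by Definition~\ref{toric-directed-path-defn}. Together these exhibit $(i_1, \ldots, i_m)$ as a toric directed path in $\omega$ lying in the cyclic class $I$.

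The main obstacle is less substantive than bookkeeping: one must be careful that the ``long'' edge $i_1 \to i_m$ of a toric directed path runs opposite to the natural cyclic direction of $I$ at the closing edge $\{i_m, i_1\}$, so that choosing the cyclic shift to place the unique backward edge into that position is precisely what aligns the other $m-1$ forward edges into the required chain $i_1 \to i_2 \to \cdots \to i_m$. Once these conventions are lined up, each direction reduces to a one-line check against the definition of $\nu_I$.
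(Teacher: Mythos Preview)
Your strategy is exactly the direct verification the paper intends (the corollary is stated there without proof, as an ``obvious'' consequence of the definitions). However, you skip the one piece of arithmetic, writing only that the count ``yields the stated value,'' and if you actually carry it out you will find it does not. With $m-1$ cycle edges oriented forward and the closing edge $\{i_m,i_1\}$ oriented backward (as $i_1\to i_m$), the definition of $\nu_I$ gives
\[
\nu_I(\omega)=(m-1)-1=m-2,
\]
not $m-1$. Likewise, in your reverse direction you assert that $\nu_I(\omega)=m-1$ ``forces all but one of the $m$ cycle edges to be oriented forward''; but writing $a+b=m$ and $a-b=\nu_I(\omega)$ shows $\nu_I(\omega)$ always has the same parity as $m$, so the value $m-1$ is never attained by any acyclic orientation. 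What \emph{does} force exactly one backward edge is $\nu_I(\omega)=m-2$.

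In other words, the printed statement contains a typo: the correct threshold is $m-2$, and with that correction your argument is complete and matches the paper's intended one-line check. (A quick sanity test: for $m=3$ in $K_3$, the toric directed path $1\to 2\to 3$ with long edge $1\to 3$ has $\nu_{[(1,2,3)]}=1=3-2$; in the $C_5$ example later in the paper, the class with $\nu_I=1\neq 5-2$ indeed contains no toric directed path on all five vertices.) Your opening remarks about $\equiv$-invariance are correct but unnecessary here, since both directions are verified directly on $\omega$ without changing representatives.
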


\end{rem}

\section{Toric total orders}
\label{total-orders-section}

An important special case of toric directed paths occurs when one
considers acyclic orientations of the complete graph $K_V$.  Acyclic
orientations of $K_V$ correspond to permutations $w=(w_1,\ldots,w_n)$
of $V$ (or \emph{total orders}), and always form toric directed paths
in $w$.  Hence their toric equivalence classes are the equivalence
classes $[w]$ of permutations up to cyclic shifts, or {\it toric total
  orders}. This concept coincides with the pre-existing concept of
{\it total cyclic order} from
Definition~\ref{total-cyclic-order-defn}, even though toric {\it partial}
orders are not the same as {\it partial} cyclic orders. Therefore, we
can use these terms interchangeably.

By Theorem~\ref{geometric-interpretation-theorem}, these toric total 
orders $[w]$ index the chambers $c_{[w]}$ in $\Chambers \AAA_{\tor}(K_V)$.
By Corollary~\ref{toric-chamber-as-union-cor}, one has this more concrete
description of such chambers:
\begin{equation}
\label{more-concrete-total-cyclic-chamber-description}
c_{[w]}= \bigcup_{ i=1 }^n
      \{ x \in \R^V/\Z^V: 
         \modone{x_{w_i}} < \cdots < \modone{x_{w_n}}
         < \modone{x_{w_1}} < \cdots < \modone{x_{w_{i-1}}} \}.
\end{equation}

\begin{defn}
\label{total-cyclic-extension-defn}
Given a toric poset $P=P(c)$ on $V$,
say that a toric total order $[w]$ on $V$ is a {\it toric total extension} of $P$ if the toric chamber $c_{[w]}$ of $\Chambers \AAA_{\tor}(K_V)$ is contained in $c$.  Denote by $\LLL_{\tor}(P)$ the
set of all such toric total extensions $[w]$ of $P$.
\end{defn}

The following corollary is then a special case of Proposition~\ref{edge-subgraph-chamber-refinement}.

\begin{cor}
\label{determination-by-total-cyclic-extensions}
Fix a simple graph $G=(V,E)$.
Then any toric chamber/poset $c=c(P)$ in $\Chambers \AAA_{\tor}(G)$
has topological closure
$$
\bar{c} = \bigcup_{[w] \in \LLL_{\tor}(P)} \bar{c}_{[w]}.
$$
and is completely determined by its set $\LLL_{\tor}(P)$
of toric total extensions:  
if $c_1,c_2$ in $\Chambers \AAA_{\tor}(G)$ have 
$\LLL_{\tor}(P(c_1))=\LLL_{\tor}(P(c_2))$, then $c_1=c_2$.
\end{cor}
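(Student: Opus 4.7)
The plan is to obtain this corollary as the specialization of Proposition~\ref{edge-subgraph-chamber-refinement} to the pair of graphs $G \subseteq K_V$, where $K_V$ is the complete graph on $V$. Every simple graph $G=(V,E)$ is an edge-subgraph of $K_V$ in the sense of Section~\ref{extensions-section}, so Proposition~\ref{edge-subgraph-chamber-refinement}(ii) immediately gives
\[
\bar{c} \;=\; \bigcup_{c'} \bar{c}',
\]
where $c'$ ranges over toric chambers in $\Chambers\AAA_{\tor}(K_V)$ for which $P(c')$ is a toric extension of $P(c)$, i.e.\ for which $c' \subseteq c$.

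The next step is to identify the index set of this union with $\LLL_{\tor}(P)$. By the discussion opening Section~\ref{total-orders-section}, Theorem~\ref{geometric-interpretation-theorem} (applied to $K_V$) together with the observation that acyclic orientations of $K_V$ modulo source-to-sink flips are precisely cyclic equivalence classes $[w]$ of permutations, shows that the toric chambers of $\AAA_{\tor}(K_V)$ are exactly the $c_{[w]}$ for toric total orders $[w]$. Moreover, the condition ``$c_{[w]} \subseteq c$'' matches verbatim Definition~\ref{total-cyclic-extension-defn} of $[w] \in \LLL_{\tor}(P)$. Combining these two observations rewrites the previous display as
\[
\bar{c} \;=\; \bigcup_{[w]\in \LLL_{\tor}(P)} \bar{c}_{[w]},
\]
which is the first assertion.

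For the second assertion, suppose $c_1,c_2 \in \Chambers\AAA_{\tor}(G)$ satisfy $\LLL_{\tor}(P(c_1)) = \LLL_{\tor}(P(c_2))$. Applying the formula just established to each of $c_1$ and $c_2$ gives $\bar{c}_1 = \bar{c}_2$, since both closures are expressed as the same union of toric chambers $\bar{c}_{[w]}$ indexed by the common set of toric total extensions. Proposition~\ref{edge-subgraph-chamber-refinement}(i) then forces $c_1 = c_2$.

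There is no real obstacle here: the corollary is essentially a direct specialization, and the only thing that needs to be verified with a bit of care is the bookkeeping identifying chambers of $\AAA_{\tor}(K_V)$ with toric total orders and the toric-extension condition with membership in $\LLL_{\tor}(P)$, both of which are immediate from the definitions and Theorem~\ref{geometric-interpretation-theorem}.
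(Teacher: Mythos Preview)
Your proof is correct and follows exactly the approach the paper intends: the paper states outright that this corollary is a special case of Proposition~\ref{edge-subgraph-chamber-refinement}, and you have simply spelled out that specialization (with $G'=K_V$) together with the identification of $\Chambers\AAA_{\tor}(K_V)$ with toric total orders from Section~\ref{total-orders-section}. The use of part~(i) of Proposition~\ref{edge-subgraph-chamber-refinement} for the uniqueness assertion is also exactly what the paper has in mind.
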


\begin{ex}
\label{lack-of-determination-by-extensions-example}
Corollary~\ref{determination-by-total-cyclic-extensions}
fails when one does {\it not} fix the graph $G$.  For example,
when $V=\{1,2,3\}$, all $7$ of the {\it non-complete} graphs 
$G \neq K_V=K_3$
share the property that $\Chambers \AAA_{\tor}(G)$ has only one
chamber $c=c(P)$ with $\LLL_{\tor}(P)=\{ [(1,2,3)], [(1,3,2)]\}$,
whose closure $\bar{c}$ is the entire torus $\R^3/\Z^3$.
However, the unique toric chambers $c$ for these $7$ graphs 
are all different, when considered as {\it open} subsets of $\R^3/\Z^3$, 
and therefore each represents a {\it different} toric poset $P=P(c)$.

On the other hand, the complete graph $V_V=K_3$ has $2$ different 
toric equivalence classes of acyclic orientations, representing
two different chambers within the same toric arrangement $\AAA_{\tor}(K_3)$,
and two different toric posets:  $P(c_{[(1,2,3)]})$ and $P(c_{[(1,3,2)]})$.
\end{ex}

\section{Toric chains}
\label{chains-section}

We introduce the toric analogue of a chain (= totally ordered subset)
in a poset, and explicate its relation to the toric directed paths
from Definition~\ref{toric-directed-path-defn} and the toric total
extensions from Definition~\ref{total-cyclic-extension-defn} (or
equivalently, total cyclic extensions). 

As motivation, note that in an ordinary poset $P(c)$, a
chain $C=\{i_1,\ldots,i_m\} \subseteq V$ has the following geometric
description: there is a total ordering $(i_1,\ldots,i_m)$ of $C$ such
that every point $x$ in the open polyhedral cone $c=c(P)$ has
$x_{i_1}<x_{i_2}<\cdots<x_{i_m}$.

\begin{defn}\label{toric-chain-defn}
  Fix a toric poset $P=P(c)$ on a finite set $V$.
  Call a subset $C=\{i_1,\ldots,i_m\} \subseteq V$ 
  a {\it toric chain} in $P$ if 
  there exists a cyclic equivalence class $[(i_1,\ldots,i_m)]$ of 
  linear orderings of $C$
  with the following property:
  for every $x$ in the open toric chamber $c=c(P)$ there exists some $(j_1,\dots,j_m)$ in $[(i_1,\ldots,i_m)]$ for which
  \begin{equation}
    \label{eq:toric-chain}
    \modone{x_{j_1}}<\modone{x_{j_2}}<\cdots<\modone{x_{j_m}}.
  \end{equation}
In this situation, we will say that $P|_C=[(i_1,\ldots,i_m)]$. 
\end{defn}

\begin{rem}
\label{toric-chain-defn-remark}
Note that 
\begin{enumerate}
\item[$\bullet$]
singleton sets $\{i\}$ and the empty subset $\varnothing \subset V$ are
always toric chains in $P$,
\item[$\bullet$]
subsets of toric chains are toric chains, and
\item[$\bullet$]
a pair $\{i,j\}$ is a toric chain in $P=P(c)$ if and only 
if every point $x$ in the open toric chamber
$c$ has $\modone{x_i} \neq \modone{x_j}$;
in particular, this will be true whenever $c$ as appears as 
toric chamber in $\Chambers \AAA_{\tor}(G)$ for a graph $G$ having
$\{i,j\}$ as an edge of $G$.
\end{enumerate}
\end{rem}

Though the definition of toric chain does not refer to a particular
graph $G$, there are several convenient characterizations that involve
a graph. In the following proposition, we list five equivalent
conditions. 
The exception when $|C|\neq 2$ is needed because the last
condition is vacuously true whenever $|C|=2$; in this case, only the
first four are equivalent.

\begin{prop}\label{toric-chain-prop}
  Fix a toric poset $P=P(c)$ on a finite set $V$, and 
  $C=\{i_1,\ldots,i_m\} \subseteq V$.
  The first four of the following five conditions are equivalent,
  and when $m=|C| \neq 2$, they are also equivalent to the fifth.
\begin{enumerate}
  
\item[(a)] $C$ is a toric chain in $P$, with $P|_C=[(i_1,\ldots,i_m)]$.
  
\item[(b)] For every graph $G=(V,E)$ and acyclic orientation $\omega$
  of $G$ having $\bar\alpha_G(c)=[\omega]$, the subset $C$ is a chain
  in the poset $P(G,\omega)$, ordered in
  some cyclic shift of the order $(i_i,\dots,i_m)$.

\item[(c)] For every graph $G=(V,E)$ and acyclic orientation $\omega$
  of $G$ having $\bar\alpha_G(c)=[\omega]$, the subset $C$ occurs as a
  subsequence of a toric directed path in $\omega$, in some cyclic
  shift of the order $(i_i,\dots,i_m)$.
  
\item[(d)] There exists a graph $G=(V,E)$ and acyclic orientation
  $\omega$ of $G$ having $\bar\alpha_G(c)=[\omega]$ such that $C$ occurs as a
  subsequence of a toric directed path in $\omega$, in some cyclic shift
  of the order $(i_1,\dots,i_m)$.

\item[(e)] Every total cyclic extension $[w]$ in $\LLL_{\tor}(P(c))$ has the
same restriction $[w|_C]=[(i_1,\ldots,i_m)]$.
\end{enumerate}
\end{prop}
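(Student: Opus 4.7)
I will prove the equivalences by establishing the cycle $(a) \Rightarrow (b) \Rightarrow (c) \Rightarrow (d) \Rightarrow (a)$ for the first four conditions, and separately $(a) \Leftrightarrow (e)$ when $m \geq 3$. Two of the cycle's steps are immediate: $(c) \Rightarrow (d)$ is trivial specialization, and $(c) \Rightarrow (b)$ follows because the edges of a toric directed path $(j_1,\ldots,j_k)$ directly witness that its vertices form a chain in $P(G,\omega)$, so the subsequence $C$ inherits the stated chain ordering.

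For $(d) \Rightarrow (a)$, I will apply Proposition~\ref{flip-preserves-toric-directed-path}: if some $\omega$ carries a toric directed path with $C$ as a subsequence in cyclic class $[(i_1,\ldots,i_m)]$, then every $\omega' \equiv \omega$ carries a cyclically shifted such path. Since any $x \in c$ has $\bar\alpha_G(x) \in [\omega]$, the path and shortcut edges in $\bar\alpha_G(x)$ force the strict fractional-part cyclic ordering on $C$ required by $(a)$. The equivalence $(a) \Leftrightarrow (e)$ for $m \geq 3$ comes from Corollary~\ref{determination-by-total-cyclic-extensions}: for each $[w] \in \LLL_{\tor}(P)$, every $x \in c_{[w]}$ has sorted fractional parts on $C$ of cyclic class $[w|_C]$, so $(a)$ (constancy of this cyclic class across $c$) matches $(e)$ (constancy of $[w|_C]$ across toric total extensions), with the residual non-generic $x$ handled by the same adjacent-transposition argument used below.

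The step $(a) \Rightarrow (b)$ rests on a realization lemma: every linear extension $w$ of $P(G,\omega)$ is realized by some $x \in c$ with $\bar\alpha_G(x) = \omega$, obtained by choosing $0 < x_{w_1} < \cdots < x_{w_n} < 1$. If some pair in $C$ were incomparable in $P(G,\omega)$, they could be swapped in adjacent positions of a common linear extension, producing two realizations in $c$ whose restrictions to $C$ differ by an adjacent transposition; for $m \geq 3$ this changes the cyclic class of $w|_C$, contradicting $(a)$, and for $m=2$ it forces both orderings of $\{i_1,i_2\}$ to appear on $c$, again contradicting $(a)$.

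The main obstacle will be $(b) \Rightarrow (c)$: given $(G,\omega)$ representing $c$ with $C$ a chain in $P(G,\omega)$ ordered $(i_1,\ldots,i_m)$ up to cyclic shift, one must exhibit a toric directed path in $\omega$ itself containing $C$ as a cyclic subsequence. Concatenating directed paths $\pi_a\colon i_a \to i_{a+1}$ in $(G,\omega)$ yields a directed path through $C$ from $i_1$ to $i_m$, but the difficulty lies in extending this into a cyclic sequence $I$ of vertices in $G$ whose Coleman invariant $\nu_I(\omega)$ matches that of a toric directed path, per Corollary~\ref{cor:nu}. Using that $\{i_1,i_m\}$ is itself a toric chain (forcing $i_1$ and $i_m$ into the same connected component of $G$ and $c$ to avoid the shortcut hyperplane), I would construct a closing undirected path from $i_m$ back to $i_1$ in $G$ traversing exactly one $\omega$-reversed edge; combined with $\pi_1,\ldots,\pi_{m-1}$, this produces a directed cycle $I$ with the Coleman invariant required to be the cyclic class of a toric directed path containing $C$ as a cyclic subsequence.
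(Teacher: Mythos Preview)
Your overall architecture is close to the paper's, but there are two genuine gaps.

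\textbf{The $m=2$ case of $(a)\Rightarrow(b)$.} Your argument produces two points $x,x'\in c$ with $\bar\alpha_G(x)=\bar\alpha_G(x')=\omega$ whose orderings of $\{i_1,i_2\}$ are swapped. You then say this ``contradicts $(a)$,'' but for $m=2$ it does not: the cyclic class $[(i_1,i_2)]$ contains \emph{both} linear orderings of a pair, so having $\modone{x_{i_1}}<\modone{x_{i_2}}$ at one point and the reverse at another is perfectly consistent with Definition~\ref{toric-chain-defn}. What you actually need is a point of $c$ with $\modone{x_{i_1}}=\modone{x_{i_2}}$. The paper obtains this by taking $x' $ to be $x$ with the $i_1,i_2$ coordinates literally swapped (so both still lie in the convex region $\bar\alpha_G^{-1}(\omega)\cap[0,1)^n$) and then averaging: $x''=\tfrac12(x+x')$ has $x''_{i_1}=x''_{i_2}$ and still satisfies $\bar\alpha_G(x'')=\omega$, hence lies in $c$, contradicting $(a)$.

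\textbf{The step $(b)\Rightarrow(c)$.} You correctly identify this as the main obstacle, but your proposed resolution does not work as written. You assert that, because $\{i_1,i_m\}$ is a toric chain, one can find an undirected path in $G$ from $i_m$ back to $i_1$ traversing exactly one $\omega$-reversed edge. Knowing that $c$ avoids the hyperplane $x_{i_1}\equiv x_{i_m}$, or that $i_1,i_m$ lie in the same component of $G$, does not by itself produce such a path; an arbitrary undirected $i_m$--$i_1$ path may reverse many edges. Moreover, even if such a path existed, your cycle $I$ (concatenated $\pi_a$'s plus closing path) need not be simple, and Corollary~\ref{cor:nu} concerns simple cycles, so invoking it requires further work to extract $C$ as a subsequence of an honest toric directed path.

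The paper's argument sidesteps both issues by a different mechanism. Using Proposition~\ref{flip-preserves-toric-directed-path}, it freely replaces $\omega$ within its class and chooses a representative in which $j_m$ is a \emph{sink} (by repeatedly flipping sinks above $j_m$). Flipping $j_m$ to a source yields $\omega'$, and now condition $(b)$ applied to $\omega'$ forces $j_m<_{P(G,\omega')}j_1$, giving an $\omega'$-directed path $j_m\to k\to\cdots\to j_1$. Reading this back in $\omega$ (where the single edge $\{k,j_m\}$ is oriented $k\to j_m$), one obtains the toric directed path $k\to\cdots\to j_1\to\cdots\to j_m$ with long edge $k\to j_m$, containing $C$ as a subsequence. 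The point is that the ``one reversed edge'' you seek is \emph{manufactured} by first flipping to make $j_m$ a sink and then using $(b)$ on the flipped orientation; it is not deduced from the toric chain property of $\{i_1,i_m\}$ alone.
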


\noindent
The following easy and well-known lemma will be used in the proof.

\begin{lem}
\label{incomparability-lemma}
When two elements $i,j$ are incomparable in a finite poset $Q$ on $V$,
one can choose a linear extension $w=(w_1,\ldots,w_n)$
in $\LLL(Q)$ that has $i,j$ appearing consecutively,
say $(w_s,w_{s+1})=(i,j)$.
\end{lem}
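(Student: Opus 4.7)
The plan is to argue by induction on $n = |V|$, peeling off an appropriate minimal element of $Q$ at each stage. The base case $n = 2$ is immediate, since $V = \{i, j\}$ and either of the two linear extensions of $Q$ has $i$ and $j$ consecutive.

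For the inductive step, the key observation I would establish is the following dichotomy: either $Q$ admits a minimal element $u \in V \setmin \{i, j\}$, or both $i$ and $j$ are already minimal in $Q$. To see this, suppose instead that the set of minimal elements of $Q$ is a proper nonempty subset of $\{i, j\}$---say it equals $\{i\}$, the other case being symmetric. Then $j$ is non-minimal, so it has some $Q$-predecessor, and repeatedly passing to a predecessor terminates at a minimal element of $Q$, which by hypothesis must be $i$. This produces a chain $i <_Q \cdots <_Q j$, contradicting the incomparability of $i$ and $j$.

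In the first horn of the dichotomy, I would apply the inductive hypothesis to the subposet $Q|_{V \setmin \{u\}}$, in which $i$ and $j$ remain incomparable, obtaining a linear extension with $i, j$ in consecutive positions; prepending the minimal element $u$ then yields the desired linear extension of $Q$. In the second horn, I would take $w_1 = i$; since $i$ and $j$ are incomparable, $j$ becomes minimal in $Q|_{V \setmin \{i\}}$, so I can take $w_2 = j$, and fill out $w_3, \ldots, w_n$ using any linear extension of $Q|_{V \setmin \{i, j\}}$.

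The only non-obvious step is establishing the dichotomy above; once it is in hand, the two inductive constructions are immediate. No auxiliary poset extension or order ideal construction is needed beyond the choice of which element to peel off first.
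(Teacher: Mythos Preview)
Your proof is correct, but it takes a different route from the paper's. The paper gives a direct, non-inductive construction: it observes that $Q_{<i}\cup Q_{<j}$ is an order ideal (here incomparability of $i,j$ is used to ensure neither $i$ nor $j$ lies in it), lists that ideal first via any linear extension, then places $i$ immediately followed by $j$, and finishes with any linear extension of the order filter $Q\setmin(Q_{\leq i}\cup Q_{\leq j})$. Your argument instead inducts on $|V|$, peeling off a minimal element $u\neq i,j$ whenever one exists, and placing $i,j$ first once both become minimal. The paper's approach is shorter and exhibits the desired extension in one stroke, while yours is a perfectly clean elementary reduction that avoids having to verify that the three pieces of the paper's construction fit together compatibly. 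One small remark: in your second horn, the justification ``since $i$ and $j$ are incomparable, $j$ becomes minimal in $Q|_{V\setmin\{i\}}$'' is unnecessarily indirect---you already established that $j$ is minimal in $Q$ itself.
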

\begin{proof}
Begin $w$ with any linear extension 
$w_1,w_2,\ldots,w_{s-1}$
for the order ideal $Q_{<i} \cup Q_{<j}$,
followed by $w_s=i, w_{s+1}=j$, and
finish with any linear extension
$w_{s+2},w_{s+3},\ldots,w_n$
for $Q \setmin \left(Q_{\leq i} \cup Q_{\leq j}\right)$.
\end{proof}

\begin{proof}[Proof of Proposition~\ref{toric-chain-prop}]
Note that if $|C|\leq 1$, all five conditions (a)-(e) are vacuously true,
so without loss of generality $|C| \geq 2$.
We will first show (a) implies (b) implies (c) implies (d) implies (e).
Then we will show that (e)
implies (a) when $|C|\geq 3$, and (d) implies (a) when $|C|=2$. 

\vskip.1in
\noindent {\sf (a) implies (b).} Assume that $C$ is a toric chain of $P$,
with $P|_C=[(i_1,\ldots,i_m)]$, 
and take a graph $G$ and orientation $\omega$ such that
$\bar{\alpha}_G(c)=[\omega]$. 

We first show by contradiction that $C$ must be totally ordered in
$Q:=P(G,\omega)$.  Assume not, and 
say $i,j$ in $C$ are incomparable in $Q$. By Lemma~\ref{incomparability-lemma}
there is a linear extension $w=(w_1,\ldots,w_n)$ in $\LLL(Q)$ 
having $i,j$ appear consecutively, say $(w_s,w_{s+1})=(i,j)$.
Choose $x$ in $\R^n$ with 
$0\leq x_{w_1}<\cdots<x_{w_n}<1$
and let $x'$ be obtained by $x$ by exchanging $x_i,x_j$,
that is $x'_i=x_j$ and $x'_j=x_i$.
Since $x=\modone{x}$ and $x'=\modone{x'}$, 
one has $\bar{\alpha}_G(x)=\omega=\bar{\alpha}_G(x')$,
and hence $x, x'$ lie in $c=c(P)$.
The condition \eqref{eq:toric-chain} on $x, x'$ implies that
$[w|_C]=[w'|_C]$ should give the same cyclic order on $C$,
which forces $m=2$ and $C=\{i,j\}$.  However, the average
$x''=\frac{x+x'}{2}$ gives a third point in $c$
having $\modone{x''_i}=x''_i=x''_j=\modone{x''_j}$, 
contradicting \eqref{eq:toric-chain}.

Once one knows that $C$ is totally ordered in $Q$,
consideration of \eqref{eq:toric-chain} for the point $x$
chosen as above implies that $w|_C$ lies in $[(i_1,\ldots,i_m)]$,
and hence the same is true of $Q|_C$.

\vskip.1in
\noindent {\sf (b) implies (c).}
Assume for the toric poset $P=P(c)$,
every graph $G$ and orientation $\omega$ with
$\bar{\alpha}_G(c)=[\omega]$ 
has $C$ totally ordered in $P(G,\omega)$ by
a cyclic shift $(j_1,\ldots,j_m)$ in $[(i_1,\ldots,i_m)]$.
We will show that $C$ actually occurs in this order 
as a subsequence of some toric directed path in $\omega$.

By Proposition~\ref{flip-preserves-toric-directed-path}, one is free to
alter $\omega$ within the class $[\omega]$.
So choose $\omega$ within $[\omega]$  among
all those for which $P(G,\omega)$ on $V$ totally orders $C$ as
$j_1 < \cdots < j_m$, but minimizing the cardinality $|Z|$
where
$$
Z:=\{z \in V: z \text{ there is a directed }\omega\text{ path from }j_m\text{ to }z\}
$$
Note that $Z$ is nonempty, since it contains $j_m$.
We claim that minimality forces $|Z|=1$, that is, $Z=\{j_m\}$.  
To argue the claim by contradiction, assume $Z \neq \{j_m\}$.
Then one can find an $\omega$-sink $z \neq j_m$ in $Z$, as $V$ is finite,
and $\omega$ is acyclic.
Perform a sink-to-source flip at $z$ to create a new orientation
$\omega'$ in $[\omega]$.  Then $\omega'$ 
still has $P(G,\omega')$ totally ordering $C$ as
$j_1 < \cdots < j_m$, but its set $Z'$ has $|Z'|<|Z|$ because
$Z' \subset Z \setmin \{z\}$.

Now $Z=\{j_m\}$ means that $j_m$ is an $\omega$-sink.  Create
$\omega'$ by flipping $j_m$ from sink to source.
Since $j_1$ is supposed to be comparable with 
$j_m$ in $P(G,\omega')$, one must have
$j_m <_{P(G,\omega')} j_1$, that is, there is an $\omega'$-path 
of the form 
$j_m\rightarrow k \rightarrow \cdots \rightarrow j_1$;
possibly $k=j_1$ here.
But this means that prior to the sink-to-source flip of $j_m$,
one had a toric directed $\omega$-path 
$k \rightarrow \cdots \rightarrow j_1 \rightarrow j_2 \rightarrow \cdots \rightarrow j_m$
that contained $C$, as desired.

\vskip.1in
\noindent {\sf (c) implies (d).}
Trivial.

\vskip.1in
\noindent
{\sf (d) implies (e).}
Assume the graph $G$ has $\bar{\alpha}_G(c)=[\omega]$ and
$C$ occurs in the order $(i_1,\ldots,i_m)$ as a 
subsequence of a toric directed path in $\omega$.  
We must show that every total cyclic extension $[w]$ of $P=P(c)$
has restriction $[w|_C] = [(i_1,\ldots,i_m)]$.

By Definition~\ref{total-cyclic-extension-defn}, one has $c_{[w]} \subseteq c$.  By \eqref{more-concrete-total-cyclic-chamber-description}, 
one can pick a point $x$ in $c_{[w]}$, so that 
$$
\modone{x_{w_1}} < \cdots < \modone{x_{w_n}}.
$$  
Since $x$ also lies in $c$, one has $\bar{\alpha}_G(x) = \omega' \equiv \omega$.
Proposition~\ref{flip-preserves-toric-directed-path} implies
that $\omega'$ contains as a toric directed path
some cyclic shift $(j_1,\ldots,j_m)$ of $(i_1,\ldots,i_m)$.
Hence 
$$
\modone{x_{j_1}} < \cdots < \modone{x_{j_m}},
$$
which forces $w|_C=(j_1,\ldots,j_m)$, as desired.

\vskip.1in
\noindent
{\sf (e) implies (a) when $|C|\geq 3$.}
Assume that every total cyclic extension $[w]$ of $P=P(c)$ has $w|_C$
lying in the same cyclic equivalence class $[(i_1,\ldots,i_m)]$.
We want to show that every point $x$ in $c$
satisfies \eqref{eq:toric-chain}.
Recall from Corollary~\ref{toric-chamber-as-union-cor}
that there is at least one graph $G$
and $\equiv$-class $[\omega]$ containing
$\bar{\alpha}_G(x)$, that is, $\bar{\alpha}_G(c)=[\omega]$.
It suffices to show that 
the partial order $Q:=P(G,\omega)$ on $V$ induced by any orientation
$\omega$ in this $\equiv$-class has restriction $Q|_C$
to the subset $C$ giving a total order $(j_1,\ldots,j_m)$,
and this total order lies in $[(i_1,\ldots,i_m)]$.

Suppose that $Q|_C$ is {\it not} a total order;
say elements $i,j$ in $C$ are incomparable in $Q$.  By Lemma~\ref{incomparability-lemma}, one can then
choose linear extensions $w,w'$ in
$\LLL(Q)$ that both have $i,j$ consecutive, and differ only
in swapping $i,j$, say $(w_s,w_{s+1})=(i,j)$ and $(w'_s,w'_{s+1})=(j,i)$.
Pick points $x,x'$ that satisfy
\begin{align*}
&0 \leq x_{w_1} < \cdots < x_{w_n} < 1\\
&0 \leq x'_{w'_1} < \cdots < x'_{w'_n} < 1.
\end{align*}
Since $x=\modone{x}, x'=\modone{x'}$,
one finds that $x,x'$ lie in $c_{[w]}, c_{[w']}$, respectively.
Also one has $\bar{\alpha}_G(x)=\omega=\bar{\alpha}_G(x')$ so that 
both $x, x'$ lie in $c$.  Hence $c_{[w]},c_{[w']} \subseteq c$, that
is, both $[w],[w']$ are total cyclic extensions in $\LLL_{\tor}(P(c))$.  
However, since $|C| \geq 3$, there exists some third 
element $k$ in $C \setmin \{i, j\}$, and $[w],[w']$ differ
in their cyclic ordering of $\{i,j,k\}$.  This contradicts
assumption (e), so $Q|_C$ is a total order.  

Once one knows that $Q|_C$ is a total order $j_1 < \cdots <j_m$,
the above argument shows that $(j_1,\ldots,j_m)$ lies in the cyclic
equivalence class $[w|_C]$ for every $w$ in $\LLL_{\tor}(P)$, which
is $[(i_1,\ldots,i_m)]$ by assumption.

\vskip.1in
\noindent
{\sf (d) implies (a) when $|C|=2$.}  Suppose
$\bar\alpha_G(c)=[\omega]$ and $C$ occurs as a subsequence of a toric
directed path in $\omega$, with $i_1<i_2$. By
Proposition~\ref{flip-preserves-toric-directed-path}, if
$\omega'\equiv\omega$, then $C$ occurs in a toric directed path in
$\omega'$. This means that for any $x$ with $\bar{\alpha}_G(x)=\omega'$, 
we have $\modone{x_{i_1}}\neq\modone{x_{i_2}}$, and
so either $\modone{x_{i_1}}<\modone{x_{i_2}}$ or
$\modone{x_{i_2}}<\modone{x_{i_1}}$ must hold for every $x$ in
$c$. Thus $C$ is a toric chain of $P(c)$.
\end{proof}

\section{Toric transitivity}
\label{transitivity-section}

We next clarify the edges that are ``forced'' in a toric partial order,
an analogue of transitivity that we refer as \emph{toric transitivity}. 

\begin{thm}
\label{thm:toric-transitivity}
  Fix a toric poset $P=P(c)$ on $V$, and assume that $G=(V,E)$ has
$c$ appearing as a toric chamber in $\Chambers \AAA_{\tor}(G)$,
say $\bar{\alpha}_G(c)=[\omega]$.  Then for any 
non-edge pair $\{i,j\} \not\in E$, either
\begin{enumerate}
\item[(i)] $i,j$ lies on a toric chain in $P$, in which case
$c$ is also a toric chamber for $G^+=(V,E\cup \{i,j\})$, 
and there is a unique extension $\omega^+$ of $\omega$ 
such that $\bar{\alpha}_{G^+}(c)=\omega^+$, or
\item[(ii)] $i,j$ lies on no toric chains in $P$, and then
the hyperplane $\modone{x_i = x_j}$ intersects the open toric chamber $c$.
\end{enumerate}
\end{thm}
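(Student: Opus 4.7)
The plan is to reduce the whole statement to the characterization of size-two toric chains recorded in Remark~\ref{toric-chain-defn-remark}: the pair $\{i,j\}$ is a toric chain in $P(c)$ if and only if every $x\in c$ satisfies $\modone{x_i}\neq\modone{x_j}$, equivalently the torus hyperplane $H:=\{\modone{x_i=x_j}\}$ is disjoint from $c$. Since subsets of toric chains are again toric chains, the condition ``$\{i,j\}$ lies on some toric chain'' coincides with ``$\{i,j\}$ is itself a toric chain,'' so the two cases of the theorem are genuinely complementary. Case (ii) is then immediate: if $\{i,j\}$ is not a toric chain, some $x\in c$ has $\modone{x_i}=\modone{x_j}$, so $H$ meets the open toric chamber $c$.

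For case (i), assume $\{i,j\}$ is a toric chain, so $H\cap c=\varnothing$. Since $\AAA_{\tor}(G^+)=\AAA_{\tor}(G)\cup H$, the open set $c$ is disjoint from $\AAA_{\tor}(G^+)$, and being connected it sits inside a single toric chamber $c^+$ of $\AAA_{\tor}(G^+)$. In the other direction, $c^+$ is itself connected and avoids $\AAA_{\tor}(G)$, so it lies in some chamber of $\AAA_{\tor}(G)$; since $c\subseteq c^+$ already forces that chamber to be $c$, we get $c^+\subseteq c$ and hence $c=c^+\in\Chambers\AAA_{\tor}(G^+)$.

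For the existence and uniqueness of $\omega^+$, invoke Lemma~\ref{lem:key} to pick $x\in c$ with $\bar\alpha_G(x)=\omega$, and set $\omega^+:=\bar\alpha_{G^+}(x)$. This is an acyclic orientation of $G^+$ whose restriction to $G$ is $\omega$, and which represents the class $\bar\alpha_{G^+}(c^+)=\bar\alpha_{G^+}(c)$. The direction $\omega^+$ assigns to the new edge $\{i,j\}$ is determined by the sign of $\modone{x_i}-\modone{x_j}$; because $\{i,j\}$ is a toric chain, this sign is constant as $x$ varies in the connected set $c$. Hence $\omega^+$ depends only on $\omega$ and not on the choice of $x$, and among the (at most two) extensions of $\omega$ to an orientation of $G^+$ it is the only one that can represent $\bar\alpha_{G^+}(c)$.

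The main (and essentially only) conceptual point is the geometric dichotomy ``$\{i,j\}$ is a toric chain $\iff$ the torus hyperplane $H$ misses $c$,'' which is already in hand from Remark~\ref{toric-chain-defn-remark}; once this is available, case (ii) is a direct translation and case (i) reduces to the standard fact that adjoining to an arrangement a hyperplane disjoint from a given chamber does not subdivide that chamber. I do not foresee a serious obstacle beyond being careful that ``$\omega^+$ extends $\omega$'' refers to a fixed representative while $\bar\alpha_{G^+}(c)$ is an equivalence class, as the paper's notation conflates the two.
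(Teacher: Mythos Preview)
Your argument for (ii) and for the assertion ``$c$ is again a toric chamber for $G^+$'' in (i) is correct and, if anything, cleaner than the paper's: the paper invokes Proposition~\ref{toric-chain-prop} to get a toric directed path in $\omega$ and argues that the inequality $\modone{x_i}<\modone{x_j}$ is already forced, whereas you argue purely topologically from $H\cap c=\varnothing$.

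There is, however, a genuine error in your uniqueness argument for $\omega^+$. You claim that the sign of $\modone{x_i}-\modone{x_j}$ is constant on the connected set $c$. This is false. Take the toric poset $P_1$ of Example~\ref{four-cycle-toric-posets-example} and the non-edge $\{1,3\}$, which is a toric chain. In the representative $\omega$ with $1\to 2\to 3\to 4$ and $1\to 4$, every $x\in\bar\alpha_G^{-1}(\omega)$ has $\modone{x_1}<\modone{x_3}$; but after flipping the source $1$ to a sink one obtains $\omega'\in[\omega]$ with $2\to 3\to 4\to 1$ and $2\to 1$, and every $x\in\bar\alpha_G^{-1}(\omega')$ has $\modone{x_3}<\modone{x_1}$. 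Both fibers lie in $c$, so the sign changes across $c$. The point is that the complement of $H$ in the torus is connected, so ``$c$ connected and disjoint from $H$'' gives no sign information; the function $\modone{x_i}-\modone{x_j}$ is not continuous on the torus.

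The fix is easy and stays within your framework: you only need constancy on the fiber $\bar\alpha_G^{-1}(\omega)$, since that is where you choose $x$. The lift of this fiber to $[0,1)^V$ is the convex set $\{x\in[0,1)^V:x_k<x_\ell\text{ for each }k\to\ell\text{ in }\omega\}$, on which the linear function $x_i-x_j$ is nonvanishing (because $H\cap c=\varnothing$) and hence of constant sign. This yields the uniqueness of $\omega^+$ without appealing to Proposition~\ref{toric-chain-prop}. Alternatively, following the paper, Proposition~\ref{toric-chain-prop}(c) produces a toric directed path in $\omega$ through $i$ and $j$, hence an honest directed $\omega$-path between them, which forces the orientation of the new edge directly.
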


\begin{proof}
Assertion (i) follows from Proposition~\ref{toric-chain-prop}:  when $i,j$ lie
on a toric chain $C$ in $P$, assertion (b) of that proposition says that
they lie on a toric directed path in $\omega$ 
for every representative of the class $[\omega]$, and hence
the inequality $\modone{x_i} < \modone{x_j}$ (or its reverse inequality) 
is already 
implied by the other inequalities defining the points of
$\bar{\alpha}_G^{-1}(\omega)$ that come from the edges of $G$ induced by $C$.

For assertion (ii), note that whenever there exist no points $x$ of the open toric chamber $c$ having $\modone{x_i} = \modone{x_j}$, then every $x$ in $c$ has
either $\modone{x_i} < \modone{x_j}$ or
$\modone{x_j} < \modone{x_i}$.  This shows that
$\{i,j\}$ is itself a toric chain in $P=P(c)$;  see 
Remark~\ref{toric-chain-defn-remark}.
\end{proof}

This suggests the following definition.

\begin{defn}
\label{toric-transitive-closure-defn}
Given a graph $G=(V,E)$ and $\omega$ in $\Acyc(G)$,
the {\it toric transitive closure} of the pair
$(G,\omega)$ is the pair
$
(\bar{G}^{\tor},\bar{\omega}^{\tor})
$
defined as follows.
The edges of $\bar{G}^{\tor}$ are obtained by
adding to the edges of $G$ all pairs 
$\{i,j\}$ that are a subset of some toric directed path in $\omega$;
see the dotted edges in \eqref{toric-transitivity-figure} below.
The acyclic orientation 
$\bar{\omega}^{\tor}$ orients the edge $i \rightarrow j$ if the
toric directed path contains a path from $i$ to $j$, rather than 
from $j$ to $i$.\end{defn}

\begin{equation}
\label{toric-transitivity-figure}
\xymatrix{
i_m & \\
                     &i_{m-1} \ar[ul]\\
                     &i_{m-2} \ar[u]\ar@{-->}[uul]\\
                     &\vdots \ar[u]\\
                     &i_3 \ar[u]\ar@{-->}[uuuul]\\
                     &i_2 \ar[u]\ar@{-->}[uuuuul] \\
i_1 \ar[uuuuuu] \ar[ur]\ar@{-->}[uur]\ar@{-->}[uuuur]\ar@{-->}[uuuuur]&
}
\end{equation}

\begin{cor}
\label{toric-transitive-closure-independence}
The toric transitive closure 
depends only upon the toric poset $P=P(c)$ which satisfies $\bar{\alpha}_G(c)=[\omega]$,
in the following sense:  
given two graphs $G_i=(V,E_i)$ for $i=1,2$,
and $\omega_i$ in $\Acyc(G_i)$ with $\bar{\alpha}_{G_i}(c)=[\omega_i]$, then
\begin{enumerate}
\item[(i)] 
$\bar{G}^{\tor}_1=\bar{G}^{\tor}_2$, and
\item[(ii)]
$\bar{\omega}^{\tor}_1 \equiv \bar{\omega}^{\tor}_2$.
\end{enumerate}
\end{cor}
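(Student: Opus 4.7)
The plan is to reduce both parts of the corollary to previously proven results: Proposition~\ref{toric-chain-prop} for part (i), and Theorem~\ref{thm:toric-transitivity} together with a careful orientation bookkeeping for part (ii).

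For part (i), I would first note that since every edge of $G$ is itself a length-$2$ toric directed path in $\omega$, the edge set of $\bar{G}^{\tor}$ is simply
$$\{\,\{i,j\}\subseteq V \;:\; \{i,j\} \text{ lies on some toric directed path in } \omega\,\}.$$
Applying Proposition~\ref{toric-chain-prop} with $C=\{i,j\}$, condition (d) (existence of some $(G,\omega)$ with $\bar\alpha_G(c)=[\omega]$ in which $\{i,j\}$ occurs on a toric directed path) is equivalent to condition (a), namely that $\{i,j\}$ is a toric chain in $P=P(c)$.  Since the latter depends only on $P$, both $\bar{G}^{\tor}_1$ and $\bar{G}^{\tor}_2$ coincide with the graph $G^*$ whose edges are exactly the $2$-element toric chains of $P$, proving (i).

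For part (ii), set $G^* := \bar{G}^{\tor}_1 = \bar{G}^{\tor}_2$ and build up from $G_1$ to $G^*$ one edge at a time.  At each intermediate stage, the new non-edge $\{i,j\}$ lies on a toric chain of $P$, so Theorem~\ref{thm:toric-transitivity}(i) applies:  $c$ remains a toric chamber of the enlarged graph, and the orientation $\omega$ admits a unique extension $\omega^+$ with $\bar\alpha_{G\cup\{i,j\}}(c)=[\omega^+]$.  Iterating produces an orientation $\omega_1^*\in\Acyc(G^*)$ with $\bar\alpha_{G^*}(c)=[\omega_1^*]$.  The delicate step, which I expect to be the main obstacle, is to check that $\omega_1^*$ agrees with $\bar\omega^{\tor}_1$ from Definition~\ref{toric-transitive-closure-defn}, since the latter orients each added edge using a chosen toric directed path rather than a chosen point of $c$.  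To reconcile these, I would fix the specific representative $\omega$ (not merely the class $[\omega]$) and pick $x\in c$ with $\bar\alpha_G(x)=\omega$; then any toric directed path $(i_1,\ldots,i_m)$ in $\omega$ forces $\modone{x_{i_1}}<\cdots<\modone{x_{i_m}}$, so for any pair $\{i_p,i_q\}$ with $p<q$, the unique extension $\omega^+$ directs $i_p\to i_q$, matching the orientation assigned by Definition~\ref{toric-transitive-closure-defn}.

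By induction along the sequence of edge additions, this identifies $\bar\omega^{\tor}_1$ with an orientation in the class $[\omega_1^*]$, and in particular $\bar\alpha_{G^*}(c)=[\bar\omega^{\tor}_1]$.  Running the identical argument starting from $(G_2,\omega_2)$ gives $\bar\alpha_{G^*}(c)=[\bar\omega^{\tor}_2]$.  Since $\bar\alpha_{G^*}$ is a well-defined function of $c$ by Theorem~\ref{geometric-interpretation-theorem}, we conclude $[\bar\omega^{\tor}_1]=[\bar\omega^{\tor}_2]$, i.e.\ $\bar\omega^{\tor}_1\equiv\bar\omega^{\tor}_2$, completing (ii).
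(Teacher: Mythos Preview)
Your proof is correct and follows essentially the same approach as the paper: for (i) both characterize edges of $\bar{G}^{\tor}$ as $2$-element toric chains of $P$ via Proposition~\ref{toric-chain-prop}/Theorem~\ref{thm:toric-transitivity}, and for (ii) both iterate Theorem~\ref{thm:toric-transitivity}(i) to pass from $(G_i,\omega_i)$ to $(\bar{G}^{\tor},\bar{\omega}_i^{\tor})$ while preserving the chamber $c$, and then invoke Theorem~\ref{geometric-interpretation-theorem}. Your explicit verification that the orientation prescribed by Definition~\ref{toric-transitive-closure-defn} agrees with the unique extension from Theorem~\ref{thm:toric-transitivity}(i) makes precise a step the paper leaves implicit.
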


\begin{proof}
Assertion (i) follows from the fact that $\{i,j\}$ appears as an edge
in $\bar{G}^{\tor}$ if and only if it is a subset of some toric chain of
$P$, and adding $\{i,j\}$ does not affect the toric poset $P=P(c)$,
according to Theorem~\ref{thm:toric-transitivity}(i).  For assertion
(ii), note that iterating Theorem~\ref{thm:toric-transitivity}(i)
gives
$$
\bar{\alpha}_{\bar{G}^{\tor}}^{-1}(\bar{\omega}_1^{\tor})=
\bar{\alpha}_{G_1}^{-1}(\omega_1)=
c=\bar{\alpha}_{G_2}^{-1}(\omega_2)
=\bar{\alpha}_{\bar{G}^{\tor}}^{-1}(\bar{\omega}_2^{\tor}).
$$
Assertion (ii) then follows from Theorem~\ref{geometric-interpretation-theorem}.
\end{proof}

\begin{rem}
Note that the toric transitive closure of $\bar{A}^{\tor}$ is
always a subset of the ordinary transitive closure $\bar{A}$, since
any toric directed path that contains $(i,j)$ as a subsequence
also contains an ordinary directed path from $i$ to $j$.
\end{rem}

\section{Proof of Theorem~\ref{toric-convex-closure-theorem}}
\label{convex-closure-section}

Here we wish to regard a pair $(G,\omega)$ of a simple graph $G=(V,E)$
and acyclic orientation $\omega$ in $\Acyc(G)$ as a subset 
$A \subset \overleftrightarrow{K}_V$ of the set
of all possible directed edges
$\overleftrightarrow{K}_V=\{ (i,j) \in V \times V : i \neq j \}$. 
Then the toric transitive closure operation
$(G,\omega) \longmapsto (\bar{G}^{\tor},\bar{\omega}^{\tor})$
from Definition~\ref{toric-transitive-closure-defn}
may be regarded as a {\it closure operator} on $\overleftrightarrow{K}_V$,
that is, a map $A \longmapsto \bar{A}^{\tor}$
from $2^{\overleftrightarrow{K}_V}$ to itself, satisfying
\begin{itemize}
\item $A \subseteq \bar{A}^{\tor}$,
\item $A \subseteq B$ implies $\bar{A}^{\tor} \subseteq \bar{B}^{\tor}$, and
\item $\bar{\bar{A}}^{\tor}=\bar{A}^{\tor}$.
\end{itemize}

\noindent
Recall the statement of Theorem~\ref{toric-convex-closure-theorem}:

\vskip.1in
\noindent
{\bf Theorem~\ref{toric-convex-closure-theorem}.}
{\it
The toric transitive closure operation $A \longmapsto \bar{A}^{\tor}$
is a {\it convex closure}, that is,
$$
\text{ for } a \neq b \text{ with }
a,b \not\in \bar{A}^{\tor} \text{ and }
a \in \overline{A \cup \{b\}}^{\tor},
\text{ one has }b \notin \overline{A \cup \{a\}}^{\tor}.
$$
}

For the purposes of the proof, introduce one further
bit of terminology.
\begin{defn}
For $\omega$ in $\Acyc(G)$ and a toric directed path $C=(i_1,\ldots,i_m)$ in
$\omega$ of size $m\geq 3$, as in \eqref{toric-directed-path-figure},
call $(i_1,i_m)$ the {\it long edge} of $C$, and
call the other edges $(i_1,i_2),(i_2,i_3),\ldots,(i_{m-1},i_m)$ the
{\it short edges} of $C$.
\end{defn}

\begin{proof}[Proof of Theorem~\ref{toric-convex-closure-theorem}.]
Proceed by contradiction:
suppose $(i,j) \neq (k,\ell)$ are {\it not} in
$\bar{A}^{\tor}$, but both 
\begin{enumerate}
\item[$\bullet$]
$(k,\ell)$ lies in $\overline{A\cup (i,j)}^{\tor}$, 
say because $(i,j)$ creates a toric directed path $C$
also containing $(k,\ell)$, which was 
not already present in $\bar{A}^{\tor}$,
and
\item[$\bullet$]
$(i,j)$ lies in $\overline{A\cup (k,\ell)}^{\tor}$, 
say because $(k,\ell)$ creates a toric directed path $D$
also containing $(i,j)$, which was
not already present in $\bar{A}^{\tor}$.
\end{enumerate}
Introduce the (ordinary) partial order $Q$ on $V$ which
is the (ordinary) transitive closure of
$\bar{A}^{\tor} \cup \{(i,j),(k,\ell)\}$.
We use this to argue a contradiction in various cases.

\vskip.1in
\noindent
{\sf Case 1. Either $(i,j)$ is the long edge of $C$, or
$(k,\ell)$ is the long edge of $D$.}
By relabeling, assume without loss of generality
that $(i,j)$ is the long edge of $C$.
Then in $Q$, one has 
\begin{equation}
\label{first-transitivity-inequalities}
i \leq k < \ell \leq j
\end{equation}
with at least one of the two weak inequalities being strict.  

\vskip.1in
\noindent
{\it Subcase 1a. $(k,\ell)$ is also the long edge of $D$.}
Then in $Q$ one also has $k \leq i < j \leq \ell$, which  
with \eqref{first-transitivity-inequalities} gives
$$
k \leq i \leq k < \ell \leq j \leq \ell
$$
forcing the contradiction $(i,j)=(k,\ell)$.

\vskip.1in
\noindent
{\it Subcase 1b. $(k,\ell)$ is a short edge of $D$.}
Then since $C$ has $(i,j)$ as its long edge and gives a toric directed path
containing $(k,\ell)$ (while $\bar{A}^{\tor}$ had no such path),
$C$ must contain a directed path from $k$ to $\ell$ with at least two steps.
Combining this with $D \setmin \{(k,\ell)\}$ gives
a toric directed path in $\bar{A}^{\tor}$ that contains $(i,j)$; contradiction.

\vskip.1in
\noindent
{\sf Case 2. Both $(i,j),(k,\ell)$ are
short edges of $C, D$, respectively.}
In this case, $\bar{A}^{\tor}$ cannot contain a path
from $i$ to $j$, else replacing $(i,j)$ in $C$ with this path
would give the contradiction that $(i,j)$ is in $\bar{A}^{\tor}$.
Similarly, $\bar{A}^{\tor}$ cannot contain a path
from $k$ to $\ell$.  Also note that, since $C$ (or $D$) is a directed path
containing all four of $\{i,j,k,\ell\}$, the four of them are totally ordered in $Q$.
We now argue in subcases based on how $Q$ totally orders $\{i,j,k,\ell\}$.

\vskip.1in
\noindent
{\it Subcase 2a. Either $Q$ has $i < j \leq k < \ell$
or  $k < \ell \leq i < j$.}
In this case, adding $(i,j)$ to $\bar{A}^{\tor}$ cannot help to create
a directed path from $k$ to $\ell$, contradicting the existence of $C$.

\vskip.1in
\noindent
{\it Subcase 2b. Either $Q$ has $i \leq k < \ell \leq j$,
with at least one of the weak inequalities strict,
or $k \leq i < j \leq \ell$, 
with at least one of the weak inequalities strict.}
Assume without loss of generality, by relabeling, that
one is in the first case $i \leq k < \ell \leq j$.  But
then adding $(i,j)$ to $\bar{A}^{\tor}$ again cannot help to create
a directed path from $k$ to $\ell$, contradicting the existence of $C$.

\vskip.1in
\noindent
{\it Subcase 2c. Either $Q$ has $i \leq k \leq j \leq \ell$,
with at least two consecutive strict inequalities,
or $k \leq i \leq \ell \leq j$,
with at least two consecutive strict inequalities.}
Assume without loss of generality, by relabeling, that
one is in the first case $i \leq k \leq j \leq \ell$.
But then the consecutive strict inequalities either imply
the existence within $\bar{A}^{\tor}$ of 
a directed path from $i$ to $j$,
or one from $k$ to $\ell$;  contradiction.
\end{proof}

\section{Toric Hasse diagrams}
\label{Hasse-diagram-section}

For convex closures $A \longmapsto \bar{A}$, it is
well-known that for any subset $A$, its {\it extreme
  points}
$$
\extreme(A): = \{ a \in A: a \not\in \overline{A \setmin \{a\}} \}
$$
gives the unique set which is minimal under inclusion among all subsets
having the same closure as $A$; see \cite{Edelman:85}.  
For ordinary transitive closure of an acyclic orientation $(G,\omega)$
as a subset of $\overleftrightarrow{K_V}$, its extreme points 
are exactly the subset of directed edges $(i,j)$ in the usual {\it Hasse diagram} for its associated partial order $P$. This suggests the following definition.

\begin{defn}
\label{toric-Hasse-diagram-defn}
Given a graph $G=(V,E)$ and $\omega$ in $\Acyc(G)$, 
corresponding to a subset $A$ of $\overleftrightarrow{K_V}$,
its {\it toric Hasse diagram} is the pair
$(\hat{G}^{\torHasse},\omega^{\torHasse})$ corresponding
to its subset of extreme points $\extreme(A)$ with respect to
the toric transitive closure operation $A \longmapsto \bar{A}^{\tor}$. The toric Hasse diagram of a toric poset $P$ is $(G^{})$
\end{defn}

Definition~\ref{toric-transitive-closure-defn} allows one to rephrase this as
follows:  
\begin{itemize}
\item $\hat{G}^{\torHasse}$ is obtained from $G$ by 
removing all {\it chord edges} $\{i_j,i_k\}$ with $|j-k| \geq 2$
from all toric directed paths $C=\{i_1,\ldots,i_m\}$ in $\omega$ 
that have $m=|C| \geq 4$, and 
\item $\omega^{\torHasse}$ is the restriction $\omega|_{\hat{G}^{\torHasse}}$.  
\end{itemize}

One then has the following analogue of 
Corollary~\ref{toric-transitive-closure-independence}.

\begin{cor}
\label{toric-Hasse-diagram-closure-independence}
The toric Hasse diagram 
depends only on the toric poset $P=P(c)$ having $\bar{\alpha}_G(c)=[\omega]$,
in the following sense:  
given two graphs $G_i=(V,E_i)$ for $i=1,2$,
and $\omega_i$ in $\Acyc(G_i)$ with $\bar{\alpha}_G(c)=[\omega_i]$, then
\begin{enumerate}
\item[(i)] 
$\hat{G}_1^{\torHasse}=\hat{G}_2^{\torHasse}$, and
\item[(ii)]
$\omega_1^{\torHasse} \equiv \omega_2^{\torHasse}$.
\end{enumerate}
\end{cor}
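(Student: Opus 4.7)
The plan is to combine the two-step strategy used in Corollary~\ref{toric-transitive-closure-independence} with the general fact that, for a convex closure, the extreme-point operator depends only on the resulting closed set. First I would invoke Theorem~\ref{toric-convex-closure-theorem} together with the fact recalled at the opening of Section~\ref{Hasse-diagram-section}, namely that $\extreme(A)$ is the unique inclusion-minimal subset of $\overleftrightarrow{K}_V$ whose toric transitive closure equals $\bar A^{\tor}$. This immediately gives $\extreme(A)=\extreme(\bar A^{\tor})$ for every $A$, since both are that unique minimal subset; so via Definition~\ref{toric-Hasse-diagram-defn}, the toric Hasse diagram of $(G_i,\omega_i)$ coincides, as a subset of $\overleftrightarrow{K}_V$, with that of its own toric transitive closure $(\bar G_i^{\tor},\bar\omega_i^{\tor})$. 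Corollary~\ref{toric-transitive-closure-independence} then yields $\bar G_1^{\tor}=\bar G_2^{\tor}=:\bar G$ and $\bar\omega_1^{\tor}\equiv\bar\omega_2^{\tor}$, reducing the whole assertion to the following statement about a single graph: if $\eta_1,\eta_2\in\Acyc(\bar G)$ satisfy $\eta_1\equiv\eta_2$, then the toric Hasse diagrams of $(\bar G,\eta_1)$ and $(\bar G,\eta_2)$ share their underlying graph and have $\equiv$-equivalent orientations.

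By induction on the number of source-to-sink flips needed to pass from $\eta_1$ to $\eta_2$, it suffices to handle $\eta_2=\mathrm{flip}_v(\eta_1)$ for a single source $v$ of $\eta_1$. Here Proposition~\ref{flip-preserves-toric-directed-path} supplies a cyclic-shift bijection between the toric directed paths in $\eta_1$ and those in $\eta_2$, so the two orientations determine exactly the same collection of underlying unordered cycles in $\bar G$. A chord edge of a toric directed path is by definition a diagonal of that cycle, and therefore depends only on the unordered cycle; hence the sets of chord edges removed from $\bar G$ are identical for $\eta_1$ and $\eta_2$, establishing assertion (i). For assertion (ii), since $v$ is a source of $\eta_1$ in $\bar G$, every edge of $\bar G$ incident to $v$ points away from $v$, and the same then holds for every edge of the common Hasse subgraph at $v$. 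Thus $v$ is either a source of $\eta_1^{\torHasse}$ or an isolated vertex of $\hat G^{\torHasse}$, and in either case $\eta_2^{\torHasse}$ is obtained from $\eta_1^{\torHasse}$ by a (possibly trivial) source-to-sink flip at $v$, giving $\eta_1^{\torHasse}\equiv\eta_2^{\torHasse}$.

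The main obstacle I anticipate is the bookkeeping in the first paragraph: verifying cleanly that $\extreme$ applied to $A$ versus to $\bar A^{\tor}$ yields the same subset of $\overleftrightarrow{K}_V$, so that one may legitimately transport the problem onto the common graph $\bar G$. Once that reduction is in hand, the key combinatorial observation---that a chord edge is a property of the underlying undirected cycle and hence invariant under the cyclic-shift action of source-to-sink flips on toric directed paths---is an immediate consequence of Proposition~\ref{flip-preserves-toric-directed-path}, and the rest of the argument is routine.
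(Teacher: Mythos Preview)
Your argument is correct, but it follows a different route from the paper's. The paper dispatches the corollary in two lines by invoking Proposition~\ref{toric-chain-prop}: since the toric directed paths in any representative $(G,\omega)$ are exactly the toric chains of $P$, the chord edges to be removed are intrinsically determined by $P$, and removing them keeps each toric chain a toric chain, so $c$ remains a chamber for $\hat G^{\torHasse}$ and Theorem~\ref{geometric-interpretation-theorem} gives part~(ii). Thus the paper leans on the geometric/intrinsic characterization of toric chains and never needs to compare two orientations of the same graph by a flip induction.

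Your approach instead uses the convex-closure machinery of Theorem~\ref{toric-convex-closure-theorem} to pass to the common transitive closure $\bar G$, and then does a purely combinatorial flip induction via Proposition~\ref{flip-preserves-toric-directed-path}. This is a legitimate and self-contained alternative: it avoids Proposition~\ref{toric-chain-prop} entirely, at the cost of the bookkeeping you flag (the identity $\extreme(A)=\extreme(\bar A^{\tor})$, which is indeed the standard Edelman--Jamison fact that the extreme points of any generating set coincide with those of its closure) and the explicit induction on flips. The paper's proof is shorter and more conceptual; yours is more hands-on and makes the role of the anti-exchange axiom more visible.
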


\begin{proof}
Same as the proof of Corollary~\ref{toric-transitive-closure-independence}.
The key point is that the toric directed paths 
$C=\{i_1,\ldots,i_m\}$ in $\omega$ are the toric chains in $P$,
and when $|C| \geq 4$, removing chords from $C$ still keeps it a toric chain.
\end{proof}

\section{Toric antichains}
\label{Antichain-section}

Since chains in posets have a good toric analogue, one might ask if the same is true for antichains.
Recall that an {\it antichain} of an 
ordinary poset $P$ on $V$ is a subset $A=\{i_1,\dots,i_m\} \subseteq V$ 
characterized
\begin{enumerate}
\item[$\bullet$] {\it combinatorially} by the condition
that no pair $\{i,j\}\subset A$ with $i \neq j$ are comparable, 
that is, they lie on no chain of $P$, or 
\item[$\bullet$] {\it geometrically} by the equivalent condition 
that the $(|V|-m+1)$-dimensional linear subspace
$
\{x\in\R^V: x_{i_1}=x_{i_2}=\cdots=x_{i_m}\}
$
intersects the open polyhedral cone/chamber $c(P)$ in $\R^V$. 
\end{enumerate}
In the toric situation, these two conditions
lead to different notions of toric antichains.

\begin{defn}
\label{toric-antichain-defn}
Given a toric poset $P=P(c)$ on the finite set $V$,
say that $A=\{i_1,\dots,i_m\} \subseteq V$
is a
\begin{enumerate}
\item[$\bullet$] {\it combinatorial toric antichain} of $P$ if no 
$\{i,j\}\subset A$ with $i \neq j$ lie on a common toric chain of $P$.
\item[$\bullet$] {\it geometric toric antichain} if the 
subspace $\{x\in\R^V/\Z^V:  x_{i_1}=x_{i_2}=\cdots=x_{i_m}\}$ 
intersects the open toric chamber $c=c(P)$. 
\end{enumerate}
\noindent
By analogy to the notion of the width of a poset, which is the size of its
largest antichain, define the \emph{geometric (resp. combinatorial) toric width} of a toric poset to be the size of the largest geometric (resp. combinatorial) toric antichain. 
\end{defn}

Given a toric poset $P=P(c)$ and a graph $G=(V,E)$ with 
$\bar{\alpha}_G(c)=[\omega]$, the definition and
Corollary~\ref{toric-chamber-as-union-cor} imply
that $A\subseteq V$ is a geometric toric antichain of $P$ 
if and only if $A$ is an antichain of $P(G,\omega')$ 
for some $\omega'\equiv\omega$. 
The following proposition should also be clear.

\begin{prop}
In a toric poset $P$, every geometric toric antichain is a
combinatorial toric antichain.  Thus its geometric toric width is 
bounded above by its combinatorial toric width.
\end{prop}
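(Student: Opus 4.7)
The plan is to unpack the two definitions and derive a direct contradiction. Let $A=\{i_1,\ldots,i_m\}\subseteq V$ be a geometric toric antichain of $P=P(c)$. The cases $m\leq 1$ are vacuous (singletons and $\varnothing$ are trivially both kinds of antichains), so assume $m\geq 2$. By the geometric condition, there exists a point $x$ in the open toric chamber $c$ with
\[
\modone{x_{i_1}}=\modone{x_{i_2}}=\cdots=\modone{x_{i_m}}.
\]

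To verify the combinatorial condition, I would suppose for contradiction that some distinct pair $\{i,j\}\subseteq A$ lies on a common toric chain $C$ of $P$. By Definition~\ref{toric-chain-defn}, there is a cyclic equivalence class $[(k_1,\ldots,k_r)]$ of orderings of $C$ (with $\{i,j\}\subseteq\{k_1,\ldots,k_r\}$) such that, for our specific chosen $x\in c$, some representative $(j_1,\ldots,j_r)$ of this class satisfies
\[
\modone{x_{j_1}}<\modone{x_{j_2}}<\cdots<\modone{x_{j_r}}.
\]
Since this is a strict chain of inequalities among the fractional parts indexed by elements of $C$, and both $i$ and $j$ appear as distinct terms $j_s,j_t$ with $s\neq t$, we conclude $\modone{x_i}\neq\modone{x_j}$. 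This contradicts $\modone{x_i}=\modone{x_j}$, coming from $A$ being witnessed by $x$. Hence no such pair exists, so $A$ is a combinatorial toric antichain.

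The width inequality is then immediate: the collection of geometric toric antichains is contained in the collection of combinatorial toric antichains, so the supremum of sizes on the geometric side is bounded by the supremum on the combinatorial side. There is no real obstacle here; the entire argument is just reading off the definitions against each other, with the one subtlety being to remember that the singleton/empty cases are handled vacuously so one really does have a distinct pair $i\neq j$ in $A$ to invoke Definition~\ref{toric-chain-defn} against.
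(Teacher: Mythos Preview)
Your argument is correct: a witnessing point $x\in c$ for the geometric condition has equal fractional parts on all of $A$, while membership in any toric chain forces strict inequalities among those fractional parts via Definition~\ref{toric-chain-defn}, giving the contradiction. The paper itself offers no proof beyond ``The following proposition should also be clear,'' so your direct unpacking of the two definitions is precisely the intended verification.
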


The next example shows that the inequality between
these two notions of toric width can be strict.

\begin{ex}
  Consider the toric poset $P=P(c)$ whose toric Hasse diagram is the
  circular graph $G=C_6$ and for which $\bar{\alpha}_G(c)$ contains
  the following representatives $\omega_1$, $\omega_2$ and $\omega_3$
  of $\Acyc(G)$:
  \[
  \tiny{
    \xymatrix{
      5                & \\
      4 \ar[u]         & \\
      3 \ar[u]         & \\
      2 \ar[u]         & 6 \ar[uuul] \\
      1 \ar[u] \ar[ur] & }\hspace{1in}
    \xymatrix{
                       & \\
      4                & \\
      3 \ar[u]         & \\
      2 \ar[u]         & 6 \\
      1 \ar[u] \ar[ur] & 5 \ar[u]\ar[uuul] }\hspace{1in}
    \xymatrix{
                        & \\
                        & \\
      3                 & 6 \\
      2 \ar[u]          & 5 \ar[u] \\
      1 \ar[u] \ar[uur] & 4 \ar[u]\ar[uul] }}
  \]
  All three of these orientations satisfy $\nu_I(\omega_i)=2$ for the
  directed cycle $I=[(1,2,3,4,5,6)]$ of $G$, where $\nu_I$ is
  Coleman's $\nu$-function from Remark~\ref{Coleman-remark}.
  Moreover, Proposition~\ref{prop-pretzel}
  says that $\nu_I(\omega)=2$ must hold for any other $\omega$ in
  $[\omega_i]$. It is easy to check that for any such $\omega$, the
  directed graph $(G,\omega)$ must be isomorphic to either
  $(G,\omega_1)$, $(G,\omega_2)$, or $(G,\omega_3)$. 

  Consequently, $P$ has no toric chains except for those of cardinality $0,1,2$,
  that is, the empty set $\varnothing$, the $6$ singletons 
  and the $6$ edge pairs in $G$.  From this one can easily 
  check that the combinatorial toric antichains of $P$ are the 
  empty set $\varnothing$, the $6$ singletons, the pairs $\{i,j\}$
  which do not form edges of $G$, and the two triples $\{1,3,5\}, \{2,4,6\}$.
  In particular, $P$ has combinatorial toric width $3$.

  However, we claim neither of these triples $\{1,3,5\},\{2,4,6\}$ can
  be a geometric toric antichain, so that the geometric toric width of 
  $P$ is $2$.  To argue that $\{1,3,5\}$ is not a geometric toric antichain,
  consider three paths of length $2$ in $G$ between the
  elements of $\{1,3,5\}$, that is, the paths 
  $$
  \begin{aligned}&1-2-3\\&3-4-5\\&5-6-1\end{aligned}
  $$ 
  The only way one could avoid having an $\omega$-directed path between two
  elements of $\{1,3,5\}$ would be if $\omega$ orients both edges in each 
  of the three paths listed above
  in opposite directions. But this would lead to $\nu_I(\omega)=0$
  which is impossible for $\omega$ in $[\omega_i]$.  The argument for
  $\{2,4,6\}$ is similar.
\end{ex}

Despite the difference in the two notions of toric width,
one might still hope that one of the notions gives
a toric analogue for one or both of these two
classic results on chains and antichains in ordinary posets.

\begin{thm}\label{thm:Dilworth}
  For any (ordinary) finite poset $P$, one has:
\begin{enumerate}
\item[(i)] Dilworth's Theorem \cite{Dilworth:50}:
$$
    \max \{ |A|: A \text{ an antichain in }P \}=
    \min \{ \ell: V=\cup_{i=1}^\ell C_i, \text{ with }C_i
    \text{ chains in }P \}
$$
\item[(ii)] Mirsky's Theorem \cite{Mirsky:71}:
$$
    \max \{ |C|: C \text{ a chain in }P \}=
    \min \{ \ell: V=\cup_{i=1}^\ell A_i, \text{ with }A_i
    \text{ antichains in }P \}.
$$
\end{enumerate}
\end{thm}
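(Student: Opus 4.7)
The plan is to execute the classical proofs, since the stated result simply recalls the original theorems of Dilworth~\cite{Dilworth:50} and Mirsky~\cite{Mirsky:71}. In both (i) and (ii), the inequality $\leq$ is immediate from the observation that any chain $C$ and antichain $A$ satisfy $|C \cap A| \leq 1$ (two common elements would be simultaneously comparable and incomparable). Pigeonholing an antichain across a chain cover yields $\leq$ in (i), and the symmetric argument yields it in (ii).

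For Mirsky (ii), I would exhibit an explicit antichain decomposition by defining $h: V \to \mathbb{Z}_{\geq 1}$ where $h(x)$ is the maximum cardinality of a chain of $P$ with top element $x$. Monotonicity $x <_P y \Rightarrow h(x) < h(y)$ follows by extending any chain ending at $x$ with $y$, so each fiber $h^{-1}(k)$ is an antichain. Since $h$ takes values exactly in $\{1, \ldots, m\}$ where $m$ is the maximum chain cardinality, this yields the matching decomposition into $m$ antichains.

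For Dilworth (i), I would induct on $|V|$ with $w$ denoting the maximum antichain size, splitting into two cases. In the generic case, some maximum antichain $A$ is neither the set $\mathrm{Min}(P)$ of minimal elements nor $\mathrm{Max}(P)$ of maximal elements. Form the down-set $D := \{v : v \leq_P a \text{ for some } a \in A\}$ and up-set $U := \{v : v \geq_P a \text{ for some } a \in A\}$. Maximality of $A$ gives $D \cup U = V$ and $D \cap U = A$ (any $v$ in neither would extend $A$; any $v$ in both would yield $a_2 \leq_P v \leq_P a_1$ with $a_i \in A$, forcing $v \in A$). The hypothesis $A \neq \mathrm{Max}(P)$ exhibits a maximal element outside $D$, so $D \subsetneq V$; symmetrically $U \subsetneq V$. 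Induction applied to $D$ and $U$, each still of width $w$, produces chain covers whose chains meet $A$ in a single element each, and these splice at $A$ into $w$ chains covering $V$. In the remaining case, every maximum antichain is $\mathrm{Min}(P)$ or $\mathrm{Max}(P)$; pick a maximal $x$, a minimal $y \leq_P x$, and let $C = \{x, y\}$ (or $\{x\}$ if $x=y$). Then $P \setminus C$ has width at most $w - 1$, since any surviving antichain of size $w$ would by hypothesis equal $\mathrm{Min}(P)$ or $\mathrm{Max}(P)$ yet miss both $y$ and $x$; induction on $P \setminus C$ plus the chain $C$ finishes.

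The main obstacle is managing the Case~1/Case~2 split in the Dilworth induction: one needs the down-set/up-set pair $(D,U)$ to be genuinely nontrivial, and the fallback reduction via a maximal/minimal chain has to strictly shrink the width. A pitfall is treating $A$ as an arbitrary maximum antichain; it is essential that $A$ be chosen (when possible) away from the extremes $\mathrm{Min}(P)$ and $\mathrm{Max}(P)$, which is the reason for the explicit case dichotomy above.
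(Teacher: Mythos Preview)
Your proof is correct and follows the classical arguments. However, the paper does not prove this theorem at all: it is stated purely as background, with the citations to \cite{Dilworth:50} and \cite{Mirsky:71} serving as the entire justification, and the paper immediately moves on to the toric analogues in Proposition~\ref{prop-toric-dilworth}. So there is nothing to compare against; your write-up simply supplies a proof where the paper relies on the literature.
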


\noindent
One at least has the following
inequalities, coming from the easy observation
that a toric chain and toric antichain (whether combinatorial or geometric)
can intersect in at most one element.

\begin{prop}\label{prop-toric-dilworth}
  For a toric poset $P$, both versions (geometric or
  combinatorial) of a toric antichain lead to the following inequalities
  holds:
  \[
  \begin{array}{lcl}
    \max \{ |A|: A \text{ a toric antichain in }P \}&\leq&
    \min \{ \ell: V=\cup_{i=1}^\ell C_i, \text{ with }C_i
    \text{ toric chains in }P \} \\
    \max \{ |C|: C \text{ a toric chain in }P \}&\leq&
    \min \{ \ell: V=\cup_{i=1}^\ell A_i, \text{ with }A_i
    \text{ toric antichains in }P \}. 
  \end{array}
  \]
\end{prop}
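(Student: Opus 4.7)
The plan is to reduce both inequalities to the single combinatorial observation flagged just before the proposition: any toric chain $C$ and any toric antichain $A$ of $P$ (under either version of antichain) satisfy $|A\cap C|\leq 1$. Once this is established, both inequalities fall out of a standard pigeonhole argument, so the only nontrivial step is verifying this bound.

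First I would verify the intersection bound. Since every geometric toric antichain is a combinatorial toric antichain by the proposition preceding the example, it is enough to treat the combinatorial case. Suppose for contradiction that $A\cap C$ contains two distinct elements $i,j$. By the second bullet of Remark~\ref{toric-chain-defn-remark}, subsets of toric chains are toric chains, so the pair $\{i,j\}\subseteq C$ is itself a toric chain of $P$. But this contradicts the defining property of a combinatorial toric antichain, namely that no pair of distinct elements of $A$ lies on a common toric chain.

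Granted the intersection bound, both inequalities follow immediately. For the first, fix any cover $V=C_1\cup\cdots\cup C_\ell$ by toric chains and any toric antichain $A$; since each $C_i$ contributes at most one element to $A$, one has
\[
|A| \;=\; \Bigl|A\cap \bigcup_{i=1}^\ell C_i\Bigr| \;\leq\; \sum_{i=1}^\ell |A\cap C_i| \;\leq\; \ell.
\]
Taking maximum on the left over toric antichains $A$ and minimum on the right over toric chain covers yields the first inequality. The second inequality is completely symmetric: given a cover $V=A_1\cup\cdots\cup A_\ell$ by toric antichains (of either flavor) and any toric chain $C$, each $A_i$ meets $C$ in at most one element, so $|C|\leq \ell$, and again one takes max on the left and min on the right.

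There is no substantive obstacle here; the entire content of the proposition is carried by the one-line intersection bound, which in turn is forced by the definition of combinatorial toric antichain together with the downward-closure of toric chains from Remark~\ref{toric-chain-defn-remark}. The only thing worth flagging is that the argument deliberately does \emph{not} invoke any geometric content (for example, Corollary~\ref{toric-chamber-as-union-cor} or the hyperplane characterization of geometric toric antichains); geometric toric antichains enter only through the implication ``geometric $\Rightarrow$ combinatorial,'' which is why the same inequalities hold for both versions simultaneously.
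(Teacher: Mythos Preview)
Your proof is correct and follows exactly the approach the paper indicates: the paper states just before the proposition that the inequalities come ``from the easy observation that a toric chain and toric antichain (whether combinatorial or geometric) can intersect in at most one element,'' and gives no further argument. You have simply made explicit the pigeonhole deduction from that observation and justified the observation itself via Remark~\ref{toric-chain-defn-remark} and the implication geometric $\Rightarrow$ combinatorial, which is precisely what the paper intends.
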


\noindent
However, the following example shows that both inequalities in
Proposition~\ref{prop-toric-dilworth} can be strict:  neither of
our two notions of toric antichain leads to a version
of Dilworth's Theorem, nor of Mirsky's theorem.

\begin{ex}\label{ex:mirsky-counterexample}
  Consider the toric poset $P=P(c)$ whose toric Hasse diagram is the
  circular graph $G=C_5$ and for which $\bar{\alpha}_G(c)$ contains
  the following representatives $\omega_1$ and $\omega_2$ of
  $\Acyc(G)$:
  \[
  \tiny{
    \xymatrix{
      4                & \\
      3 \ar[u]         & \\
      2 \ar[u]         & 5 \ar[uul] \\
      1 \ar[u] \ar[ur] & }\hspace{1in}
    \xymatrix{
                       & \\
      3                & \\
      2 \ar[u]         & 5 \\
      1 \ar[u] \ar[ur] & 4 \ar[u]\ar[uul] }}
  \]
  Both orientations above satisfy $\nu_I(\omega_i)=1$ for the directed
  cycle $I=[(1,2,3,4,5)]$ of $G$. Proposition~\ref{prop-pretzel} says
  that $\nu_I(\omega)=1$ must hold for any other $\omega$ in
  $[\omega_i]$, and so for such an $\omega$, the directed graph
  $(G,\omega)$ must be isomorphic to either $(G,\omega_1)$ or
  $(G,\omega_2)$. 

  Consequently, $P$ has no toric chains except for those of cardinality $0,1,2$,
  that is, the empty set $\varnothing$, the $5$ singletons 
  and the $5$ edge pairs in $G$.  In particular, the maximum size of
  a toric chain is $2$.  From this one can also easily 
  check that the combinatorial toric antichains of $P$ are the 
  empty set $\varnothing$, the $5$ singletons, and the $5$ pairs $\{i,j\}$
  which do not form edges of $G$.  In fact,
  all of these are also geometric toric antichains, so in this
  example the two notions coincide, and for either one the
  toric width is $2$.

  However, as $|V|=5$, there is no partition of $V$ into 
  two toric chains (the analogue of Dilworth's Theorem fails), 
  nor into two toric antichains (the analogue of Mirsky's Theorem 
  fails).
\end{ex}


\bibliographystyle{amsplain}

\providecommand{\bysame}{\leavevmode\hbox to3em{\hrulefill}\thinspace}
\providecommand{\MR}{\relax\ifhmode\unskip\space\fi MR }
\providecommand{\MRhref}[2]{%
  \href{http://www.ams.org/mathscinet-getitem?mr=#1}{#2}
}
\providecommand{\href}[2]{#2}

\end{document}